\newtheorem{Proposition}{Proposition}
\newtheorem{Corollary}{Corollary}
\DeclareMathOperator{\expd}{Exp}
\DeclareMathOperator{\erld}{Erlang}
\newcommand{\rnot}{R_0^{\mathsf{sys}}}
\newcommand{\tsf}{\mathsf{T}}
\newcommand{\hsf}{\mathsf{H}}
\newcommand{\lsf}{\mathsf{L}}
\newcommand{\abf}{\mathbf{A}}
\newcommand{\bbf}{\mathbf{B}}
\newcommand{\lhr}{\lambda_\hsf}
\newcommand{\llr}{\lambda_\lsf}
\newcommand{\rnothr}{R_0^{H}}
\newcommand{\rnotlr}{R_0^{L}}
\newcommand{\rhhb}{R_0^{\hsf\overset{\bbf}{\to}\hsf}}
\newcommand{\rhlb}{R_0^{\hsf\overset{\bbf}{\to}\lsf}}
\newcommand{\rllb}{R_0^{\lsf\overset{\bbf}{\to}\lsf}}
\newcommand{\rlla}{R_0^{\lsf\overset{\abf}{\to}\lsf}}
\newcommand{\rlhb}{R_0^{\lsf\overset{\bbf}{\to}\hsf}}
\newcommand{\rlha}{R_0^{\lsf\overset{\abf}{\to}\hsf}}
\newcommand{\rta}{R_0^{\tsfo\overset{\abf}{\to}\tsft}}
\newcommand{\rtb}{R_0^{\tsfo\overset{\bbf}{\to}\tsft}}
\newcommand{\qh}{q_\hsf}
\newcommand{\ql}{q_\lsf}
\newcommand{\qt}{q_\tsf}
\newcommand{\tsfo}{\tsf_{1}}
\newcommand{\tsft}{\tsf_{2}}
\newcommand{\psys}{\pi}
\newcommand{\wll}{W_{\lsf\to\lsf(i)}^{(h,\ell)}}
\begin{document}

\ARTICLEAUTHORS{
\AUTHOR{Kang Kang\thanks{Department of Industrial and Systems Engineering, University of Minnesota, Minneapolis, MN 55455}}
\AFF{\EMAIL{kangx747@umn.edu}}
\AUTHOR{Sherwin Doroudi\footnotemark[1]}
\AFF{\EMAIL{sdoroudi@umn.edu}}
\AUTHOR{Mohammad Delasay\thanks{College of Business, Stony Brook University, Stony Brook, NY 11794}}
\AFF{\EMAIL{mohammad.delasay@stonybrook.edu}}
\AUTHOR{Alexander Wickeham\footnotemark[1]}
\AFF{\EMAIL{wicke086@umn.edu}}
} 

\ABSTRACT{
We propose a new modeling framework for evaluating the risk of disease transmission during a pandemic in small-scale settings driven by stochasticity in the arrival and service processes, i.e., congestion-prone confined-space service facilities, such as grocery stores. We propose a novel metric ($\rnot$) inspired by $R_0$, the ``basic reproduction number'' concept from epidemiology, which measures the transmissibility of infectious diseases. We derive the~$\rnot$ metric for various queueing models of service facilities by leveraging a novel queueing-theoretic notion: sojourn time overlaps.  We showcase how our metric can be used to explore the efficacy of a variety of interventions aimed at curbing the spread of disease inside service facilities. Specifically, we focus on some prevalent interventions employed during the COVID-19 pandemic: limiting the occupancy of service facilities, protecting high-risk customers (via prioritization or designated time windows), and increasing the service speed (or limiting patronage duration).  We discuss a variety of directions for adapting our transmission model to incorporate some more nuanced features of disease transmission, including heterogeneity in the population immunity level, varying levels of mask usage, and spatial considerations in disease transmission.
}

\KEYWORDS{COVID-19 pandemic, Disease spread, Service systems, Queueing theory, Epidemiology}

\maketitle

\section{Introduction}\label{sec:intro}
Research on the spread of COVID-19---and efforts to curb it---have so far focused primarily on two categories of processes: (i) the biological and physical processes that govern the spread of viral particles (e.g., through fomites, respiratory droplets, and aerosolized particles) and (ii) the positioning and movement of humans throughout and across their communities, as they fulfill their various wants and needs (e.g., to work, shop for groceries, seek healthcare, etc.). Key efforts to mitigate the spread of disease have focused on both fronts.  Attempts to directly interfere with the spread of viral particles when people are in close proximity to one another include the installation of transparent barriers, use of air filters, and wearing of masks.  Meanwhile, \emph{social/physical distancing} interventions attempt to reduce the extent to which people are in close proximity to one another altogether. Examples of such interventions in service facilities, including retail stores, are  managing traffic flow through one-way movements in the isles, restricting the number of customers (for example, one in, one out admission policies), and facilitating a minimum distance of six feet between customers (for example, by using floor stickers)~\citep{bove2020restrict}.

To date, little analytic work has emerged that explores the interplay between these two categories of processes---(i) and (ii) mentioned above---in small-scale (as opposed to community- or population-level) settings.  We fill this gap by interweaving \emph{quantitative descriptions} of these two categories of processes into stochastic models that aid decision makers in the assessment of transmission risks in congestion-prone service facilities under a variety of interventions. For example, consider what a simple quantitative description of (i) the process of viral transmission might look like in a grocery store. A function could describe the likelihood that an infectious customer will transmit enough aerosolized viral particles to a susceptible customer so as to eventually infect the latter, given that their sojourns in the store overlapped for some duration of time.  Meanwhile, a quantitative description of (ii) human movement might consist of the \emph{stochastic} pattern by which both infectious and susceptible customers arrive to the store over time and how long each customer spends in the store.  Our work shows how one can deduce relevant transmission risk measures (e.g., the average number of transmissions in this store per month) from these two quantitative descriptions.

In developing our models, we must take into account the fact that human movement patterns exhibit \emph{idiosyncratic} or \emph{stochastic} variation.  The grocery store described above may be particularly busy at 10 A.M. on a given Tuesday---not because that time of day is typically popular for shopping on Tuesdays (\emph{systematic} variation)---but because it just so happened by chance that many shoppers decided to visit the store around 10 A.M. on that particular Tuesday (\emph{stochastic} variation). Such variation suggests that the store runs long periods of time without any transmissions, but then occasionally exhibits significant transmission events where a single infectious customer (or staff member) infects multiple susceptible individuals.  Such an infection pattern is consistent with observations that SARS-CoV-2 transmission exhibits a significant degree of \emph{overdispersion}: most of those who are infected in turn infect very few (if any others), but a small minority of the infected are responsible for the bulk of all transmission, often infecting a number of others in the same environment at roughly the same time \citep{Adam2020, althouse2020stochasticity}. Meanwhile, the fact that some service systems have transitioned from accepting walk-in customers to running by appointment~\citep{BURSTEIN2020} suggests an awareness that stochasticity can drive infection.  Nevertheless, very little analytic work has been devoted to capturing these effects. Our work is a step toward filling this lacuna.

For many decades, the mathematical discipline of queueing theory has in large part been concerned with studying the impact of stochastic variation in both arrival and service patterns on \emph{waiting time}.  Our work illustrates how queueing-theoretic notions can also be adapted to study the effect of stochasticity on each customer pair's \emph{sojourn time overlap}---the duration of time that a pair of customers are both present in the system.  We then leverage our analysis of sojourn time overlaps to obtain expressions for~$\rnot$---a new measure of risk that we propose as the service-center specific analogue of the basic reproduction number~$R_0$ taken from the epidemiological literature---under a variety of classical queueing systems.

Throughout this paper we take classic queueing models (i.e., the~M/M/1,~M/M/$c$, and~M/M/$c$/$k$ queues) that will serve as \emph{exemplars}, allowing us to demonstrate the computation of our novel~$\rnot$ metric in relatively simple settings.  These models highlight the ways in which our approach can evaluate the efficacy of a variety of interventions, including limiting the occupancy of service facilitates and prioritizing the service of high-risk customers. Therefore, the primary contribution of this paper is the introduction of and elaboration on a \emph{modeling framework} consisting of (i) the introduction of new metrics (e.g., $\rnot$), (ii) a flexible set of assumptions regarding disease transmission (i.e., a disease transmission model---with additional variations discussed in the concluding section of this paper), and (iii) worked examples demonstrating how novel queueing-theoretic notions (e.g., sojourn time overlaps) can be used to obtain exact results for these metrics across a variety of systems and interventions.  While the nuances of comparatively few real-world service systems are best captured by models as simple as the M/M/1 and M/M/$c$ queues, we hope our exemplars will serve as blueprints allowing for future research to determine the novel~$\rnot$ metric (or other metrics inspired by~$\rnot$) for more complicated queueing (or other stochastic) models that best capture the features called for by a given problem of interest.

A brief summary of the specific contributions and examples explored in the paper are as follows: we introduce our primary transmission model and the novel $\rnot$ metric, we obtain exact results for $\rnot$ in both the M/M/1 (Section~\ref{sec:mm1}) and M/M/$c$ (Section~\ref{sec:mmc}) queueing systems. We then explore the $\rnot$ metric under three families of interventions: limiting service facility occupancy (Section~\ref{sec:limitedOccupancy}), protecting high risk customers through prioritization (Section~\ref{sec:priorities}) or dedicated time windows (Section~\ref{sec:DesignatingShoppingTimes}), and increasing the service rate of the system (Section~\ref{sec:serviceRate}).  Finally, we show how our modeling framework can flexibly accommodate a variety of additional features: heterogeneity in customer infectiousness and susceptibility (Section~\ref{sec:extension-het}), the possibility of the simultaneous presence of multiple infectious customers within the service facility (Section~\ref{sec:extension-groups}), and the potential impact of distance on disease transmission (Section~\ref{sec:extension-distance}).  

\section{Literature Review}\label{sec:lit}
Most of the research modeling pandemics prior to COVID-19 focuses on disease spread and control at the \emph{population level}, including those that use modifications of classic compartment models which simplify the mathematical modeling of infectious diseases by assigning the population to compartments (for example, Susceptible, Infectious, and Recovered in the SIR model~\citep{weiss2013sir}). Such models have been used for the spread of the COVID-19 pandemic and evaluation of the efficiency of various population-level interventions, including social distancing guidelines~\citep{ housni2020future,Kaplan2020}, cost-benefit analysis of lockdown policies~\citep{acemoglu2020multi, alvarez2020simple, glover2020health}, and targeted restrictions on ``superspreader'' locations~\citep{Chang2021}. Compartment models have also been used in conjunction with spatial epidemic spread models to incorporate the movement of people from one location in a community to another~\citep{Balcan2009,drakopoulos2017eliminate, drakopoulos2017network}. For example, \cite{birge2020controlling} analyze a spatial epidemic spread model suggesting that targeted closures curb the spread of the COVID-19 epidemic at substantially lower economic losses than city-wide closure policies. Other recent research in this stream include~\cite{Chinazzi2020,Jia2020}, and~\cite{Chang2021}. As opposed to these models and their focus on population- and community-level disease spread, our focus is on the spread of disease during a pandemic at small-scale settings like a service facility that is prone to congestion that is driven by idiosyncratic stochasticity.  

Queueing models are designed to capture stochasticity in the operations of service facilities and customers' sojourn in such systems. Most of the queueing work related to epidemics also focuses on population-level disease spread and control. For example, \cite{kumar1981some}, \cite{trapman2008useful}, \cite{dike2016queueing}, and \cite{singh2018markovian} borrow standard queueing-theoretic notions such as M/M/1,
M/G/1, and busy-period analysis in order to investigate the efficiency of intervention strategies pertaining
to quarantine centers, vaccination, etc. The COVID-19 pandemic has spurred renewed interest in this topic with researchers mainly focusing on evaluating the effectiveness of various controls~\citep{alban2020icu,cui2020design,long2020pooling,meares2020system,palomo2020flattening}. The use of queueing theory in this paper differs from that in the works cited above in  that our models are designed to capture the spread of disease---and evaluating the effectiveness of a variety of interventions---in \emph{small-scale settings} (i.e., service facilities) by capturing the stochasticity of the disease transmission as well as stochasticity of customers' sojourn.

Relatively little of the large body of quantitative research on the COVID-19 pandemic that has developed in the past year has focused on the spread of disease---and its mitigation---in small-scale settings.  We briefly discuss the work devoted to this topic.  First, \cite{shumsky2020retail} model customers' movement in a single-aisle grocery store and show that one-way movement lowers the transmission risk significantly if transmission occurs only when two customers are in close proximity. Their transmission model uses customers' proximity to estimate exposure risks. While proximity certainly plays a role in the transmission of SARS-CoV-2, our proposed model provides a complementary perspective on transmission by emphasizing duration of exposure rather than distance (which we also address in Section~\ref{sec:extension-distance}).\footnote{Our emphasis on duration of exposure rather than distance is motivated by e.g., \cite{Zhang14857} that caution against the use of models driven purely by proximity as they may be insufficient to address aerosol transmission, \cite{Stadnytskyi2020} that present evidence for  the long lifetime of aerosol particles, and \cite{zhang2015} that discuss rapid air-mixing.}  Second, \cite{garcia2020assessment} develop models to assess the COVID-19 transmission risk in  outdoor  crowds by capturing details such as people's  physical  distance  and head orientation. They find that street caf\'{e}s and venues where people form queues present a large average rate of new infections caused by a customer when customers' close proximity was prolonged over considerable time.

Third, \cite{Tupper32038} extend the epidemiological basic reproduction number $R_0$ and introduce an analogous measure, ``event $R$,'' which represents the expected number of new infections which occur at an event as a result of contact with a single infected individual. This measure captures four transmission factors: intensity, duration of exposure, the proximity of individuals, and the degree of mixing (people interacting with several groups).  Our models shares many features with those of this paper (e.g., we also develop a related metric, $\rnot$), although crucially unlike our work, they do not consider arrival or departure processes (i.e., queuing dynamics).\footnote{Note that \cite{Tupper32038} are studying disease transmission during \emph{events}, and so they understandably need not be concerned with queueing dynamics.  Meanwhile,  as we are studying disease transmission in \emph{service facilities} (facing arrivals subject to idiosyncratic stochastic variation), our work essentially necessitates the consideration of such dynamics.}   In fact, none of the three papers discussed above incorporate the stochasticity involved in the congestion of these small-scale systems, which affects the duration of exposure and transmission intensity, and consequently, the risk of infection.

The paper that is closest to ours with respect to incorporating the aforementioned stochasticity is the work by~\cite{perlman2020reducing} who use an existing queueing-theoretic metric (specifically, the second factorial moment of the number of customers in the service facility) as a proxy for the level of transmission risk in the system. They provide a method for calculating this metric in a detailed model of a grocery store retailer. We develop a detailed metric of viral transmissibility in a queueing system by taking into account the distribution of the duration of sojourn time overlaps between pairs of customers.

We provide a detailed comparison between our proposed metric and the one proposed in~\cite{perlman2020reducing} in Section~\ref{sec:yechiali}. While it is beyond the scope of this paper, our metric can be computed for more complicated systems than those studied here, including the aforementioned grocery store model presented in~\cite{perlman2020reducing}.  In yet more recent work~\cite{perlman2021impact} consider a different queueing-theoretic measure of risk: specifically, each customer faces a risk proportional to the number of customers they ``meet'' while waiting outside of the service facility.\footnote{This metric results in an aggregate rate of risk equal to the arrival rate multiplied by the time average number of customers waiting outside the service facility.}  Unlike our work, they do not consider the duration of such ``meetings.''

\section{Modeling Disease Transmission in a Service Facility}
\label{sec:model}
Our goal is to assess the risk of disease transmission in service facilities through our proposed~$R_0^{\text{sys}}$ metric, which is a reinterpretation of the epidemiological concept  of the \emph{basic reproductive rate}. In this section, we first discuss the dynamics of disease transmission and customers' movement inside a service facility and, later in Subsection~\ref{sec:rnot}, we elaborate the~$R_0^{\text{sys}}$ metric.

We consider service facility settings in which customers arrive, spend some time in the facility receiving service, and then leave (for example, a retail store or a bank branch). Customers entering the service facility are either \emph{infectious} (capable of infecting others) or \emph{susceptible} (capable of being infected by others). Of course, customers may not fall neatly into these categories (e.g., they are infected but not yet infectious---referred to as ``exposed'' in the epidemiological literature \citep{brauer2019mathematical}---or they might have immunity derived from prior infection or vaccination). We address the cases of immunity in Section~\ref{sec:extensions}.

Our viral transmission model inside the service facility derives from the exponential dose-response model, frequently used in the study of viral transmission, as follows: A susceptible customer that coexists in the service facility with an infectious customer is exposed to viral particles at a constant rate across time, and each exposure has a (very small) probability of causing the susceptible customer to become infected. Treating these potential infection events as independent, the number of exposures required for infection is geometrically distributed, and taking these exposures to be happening very frequently across time, the \emph{transmission threshold}~$\theta$---i.e., the amount of time the sojourns of the infectious and susceptible customers must overlap before the susceptible customer becomes infected---follows an exponential random variable with some rate~$\alpha$ (i.e., the mean transmission threshold is $1/\alpha$); see~\cite{Szeto2010} for more information on this model.

Estimating the transmission rate~$\alpha$ empirically can prove difficult; see~\cite{Watanabe2010} and~\cite{Zhang2020} for examples of work that attempt to estimate the transmission threshed~$\theta$ for the SARS-CoV-1 (i.e., the agent that led to the 2003 SARS outbreak) and SARS-CoV-2 viruses (i.e., the agent that led to the COVID-19 pandemic), respectively.  Of course, the transmission threshold~$\theta$ may depend on the particular pair of infectious and susceptible customers (e.g., due to the type of mask they are wearing if any, how they are breathing, etc.); we discuss such possibilities in further detail in Section~\ref{sec:extension-het}.  More generally, some models also suggest that the transmission threshold~$\theta$ follows a non-exponential distribution. For example, the beta-Poisson dose response model is sometimes more plausible when a population exhibits highly heterogeneous susceptibility to the pathogen~\citep{Moran1954}; however, fitting the parameters of such models requires substantially detailed data, which is often not available.  Various other phase-type distributions have proved useful in modeling dose-response relationships and should be considered when, for example, arrivals are highly variable (super-Poisson)~\citep{kuhl2006receptor, kuhl2007nonexponential}.  

In section~\ref{sec:extension-het} we show how our analysis can be modified to accommodate a hyper-exponential dose-response relationship.

Fig.~\ref{fig:example} shows an example of a sample path of the arrival and service processes of four customers in a service facility where customer~3 is infectious while others are susceptible. The sojourn of customers~1,~2, and~4 overlap with that of the infectious customer~3 for~10,~30, and~20 minutes, respectively. The plot on the right specifies the probability that each of the susceptible customers is infected during their sojourn time and due to their overlap with the infectious customer, if the transmission threshold~$\theta$ is exponentially distributed with mean of~$15$ minutes. On high level, these probabilities and the corresponding overlap times form the basis of the computation of our~$\rnot$ metric, though aggregated over all possible sample paths. 
\begin{figure}[]
\centering
\includegraphics[width=\textwidth]{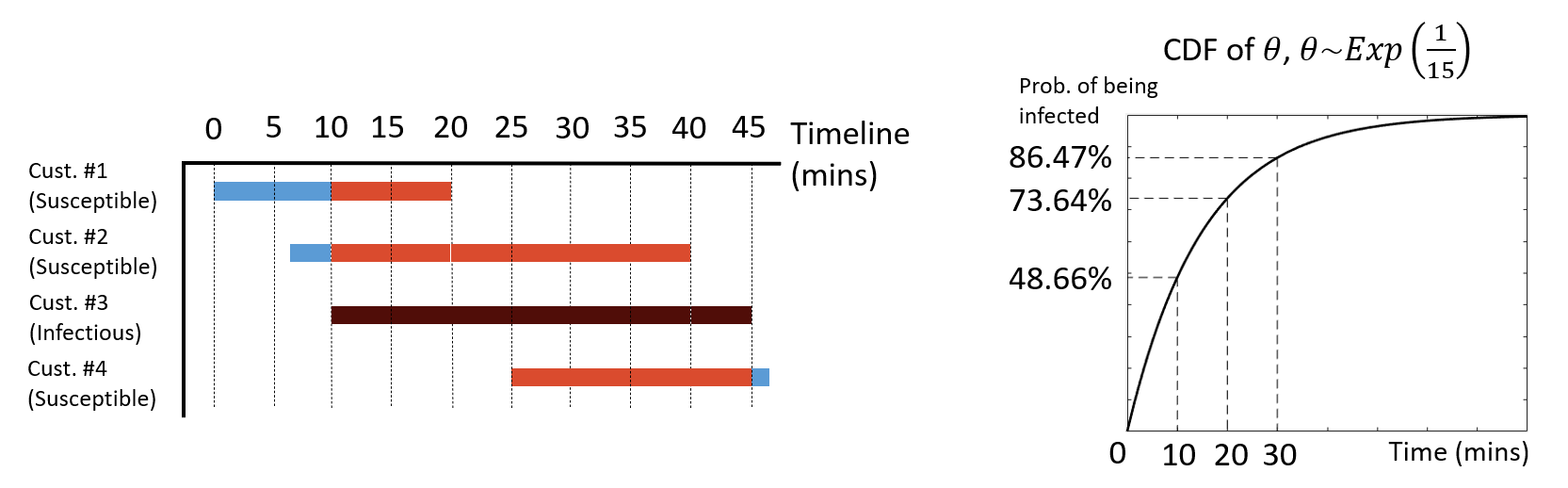}
\caption{An illustration of susceptible customers' sojourns and sojourn time overlaps with an infectious customer (customer 3). The right plot shows the CDF of~$\theta\sim \expd(1/15)$ and specifies the infection risk of the susceptible customers.}
\label{fig:example}
\end{figure}
  
\subsection{$\rnot$ and related metrics measuring public health risks}
\label{sec:rnot}
Our novel metric~$\rnot$ captures the stochastic dynamics of service facilities to measures public health risk during a pandemic, which is a reinterpretation of the epidemiological concept of the the \emph{basic reproductive rate}, $R_0$, which measures the transmissiblity of infections diseases~\citep{heffernan2005perspectives, liu2020reproductive}.  The $R_0$ value associated with an epidemic is the expected number of transmissions from an infected individual in a population where all other individuals are susceptible. Our proposed~$\rnot$ metric is an analogous metric that measures \emph{the expected number of transmissions from an infected customer during a single sojourn in a service facility, assuming all other customers in the facility are susceptible}.
\footnote{For simplicity, our transmission model (and hence, our definition of $\rnot$) disregard the possibility of staff members infecting (or becoming infected by) customers and/or one another.  While our model can be modified in a straightforward fashion to capture infections between staff and customers in a ``naive'' sense, modeling the number of infectious staff members at any given time will present its own challenges.  In practice, whenever a staff member is infectious, it is likely that one or more of their colleagues are also (or will soon become) infectious, since staff spend a lot of time with each other in the service facility.  Additionally, staff who are suspected of be infected are often encouraged to stay home from work; thus, it may be prudent to include a modeling feature that allows for the occasional removal of staff members.  These considerations are beyond our scope, but may provide fruitful avenues for future work.}  Hence, while the classical $R_0$ value associated with an epidemic captures transmissibility in all contexts within a community, $\rnot$ only captures transmissibility within a single service facility, and only during a single visit to said facility. Note that just as $R_0$ depends on a variety of community-specific factors beyond the biological and physical properties of the contagion (e.g., population density, traveling patterns, government interventions and recommendations, compliance with said interventions and recommendations, etc.), $\rnot$ will depend on many of the attributes specific to the service facility in question (e.g., arrival rates, service rates, and system design interventions).\footnote{We note that one important difference between the original $R_0$ metric and $\rnot$ is that the former measures transmission risk in a closed-loop system (i.e., a high $R_0$ value leads to what is initially an exponential growth in the spread of the disease); $\rnot$ exists within an open-loop system, i.e., we treat the service facility as a small part of the community, where an increase in infection at the facility is unlikely to cause a significant feedback in terms of an increase in infectious arrivals at the facility; i.e., we treat the probability of a customer being infectious as exogenous.}

The $\rnot$ metric allows for the derivation of other metrics of interest.  For example, if we assume that customers arrive with rate~$\lambda$ and are infectious with some small probability~$p$, then the rate of new infections at a given service facility can be approximated by~$\lambda p\rnot$, so long as~$p$ is small.  Similarly, the mean fraction of customers that are infected can be approximated by~$p\rnot$.   
   
\subsection{Model Implications:  Comparing the $\rnot$ and $\mathbb E[N(N-1)]$ metrics}
\label{sec:yechiali}
\cite{perlman2020reducing} propose~$\mathbb E[N(N-1)]$ (where~$N$ is the number of customers in the system) as a measure of public health risk in a queueing model of a grocery store.  This metric gives the time average number of \emph{pairs} of customers in the system.  Each pair represents a possible transmission, as one customer in the pair may be infectious and the other susceptible.  While this metric represents a reasonable candidate for studying public health risks due to congestion in queueing systems (especially, as existing methods exist for its computation across a variety of system settings), our~$\rnot$ metric captures certain features of disease transmission (and therefore distinguish between varying levels of risk) in ways that the~$\mathbb E[N(N-1)]$ metric does not.  In particular, there exist systems with identical~$\mathbb E[N(N-1)]$ values but considerably different~$\rnot$ values, which makes~$\mathbb E[N(N-1)]$ unable to assess the efficacy of some interventions.

The following example, although artificial, illustrates the primary shortcoming of the~$\mathbb E[N(N-1)]$ metric, which is that it does not account for transmission times and thresholds. Consider a~$D^m/D^m/1$ system with deterministic arrival and service processes where~$m$ customers arrive simultaneously every~$1/\lambda$ time units (i.e., exactly at times~$t=0,1/\lambda,2/\lambda,\ldots$), and a server serves the~$m$ customers as a batch after they have been in the system for~$1/\mu$ time units (where $\mu>\lambda$ in order to ensure system stability), so that the number of customers in the system at time $t$ is given by $N(t)=m$ during time intervals $(0,1/\mu)$, $(1/\lambda,1/\lambda+1/\mu)$, $(2/\lambda,2/\lambda+1/\mu)$, etc., and~$N(t)=0$ during the time intervals~$(1/\mu,1/\lambda)$, $(1/\lambda+1/\mu,2\lambda)$, $(2/\lambda+1/\mu+3/\lambda)$, etc. It is straightforward to obtain~$\mathbb E[N(N-1)]=\lambda m(m-1)/\mu$ for this system.

Meanwhile, we can compute~$\rnot$ as follows: if we assume that one arrival is infectious, while others are susceptible, then the infectious arrival's sojourn will coincide with the~$m-1$ other customers who arrived (and will depart) with that customer. Their sojourn will not overlap with that of any other customers.  Hence, by the linearity of expectation, the infectious customer will infect $\rnot=(m-1)\mathbb P(\theta\ge 1/\mu)$ other customers on average. Now assume that each arrival is infectious, independent of all others, with some probability $p\ll1/m$, allowing us to safely ignore the possibility of there ever being more than one infectious arrival in the system.  Then, since this system features an infectious customer arrival rate of $pm\lambda$, the rate at which customers become infected (under our transmission assumptions) is \[pm\lambda\rnot=p\lambda m(m-1)\mathbb P(\theta\ge 1/\mu)=p\mu\mathbb P(\theta\ge1/\mu)\mathbb E[N(N-1)].\]  Hence, under our transmission assumptions, within this context, $\mathbb E[N(N-1)]$ is an appropriate choice for evaluating the efficacy of various interventions if and only if $p\mu\mathbb P(\theta\ge1/\mu)$ remains (at least nearly) unchanged as a result of these interventions.   If we further assume that $\theta\sim\expd(\alpha)$, the aforementioned quantity becomes $p\mu\left(1-e^{-\alpha/\mu}\right)$.  Presumably, most \emph{operational} interventions will leave $p$ and $\alpha$ unchanged, and moreover $p\mu(1-e^{-\alpha/\mu})\approx p\alpha$ when $\alpha\ll\mu$. So, in a regime where transmission takes a very long time on average relative to the service rate,~$\mathbb E[N(N-1)]$ can even be used to assess interventions that change~$\mu$ (so long as it is kept in this regime). But, when~$\alpha/\mu$ is not negligibly small, then interventions that change~$\mu$ may not be adequately assessed by the~$\mathbb E[N(N-1)]$ metric. For example, an intervention which leads to both doubling~$\lambda$ and~$\mu$, which leaves~$\mathbb E[N(N-1)]$ unchanged, can actually have a significant effect on reducing transmissions, as individual customers cut their exposure times in half.

In fact under the exponential dose response model (i.e., when $\theta\sim\expd(\alpha)$), what the $\mathbb E[N(N-1)]$ metric captures is a measure of risk that measures the number of times infection events would have occurred under the assumption that a customer who has already become infected can become infected again (during the same sojourn in the service facility).  This type of framework would suggest that spending 100 time units with one infected individual is 100 times worse (in terms of some expected healthcare risk) than spending 1 unit of time with the same individual.  Note however that the likelihood of becoming infected more than once in this hypothetical sense is equal to the likelihood of experiencing two arrivals in a Poisson process with rate $\alpha$ in an interval of length~$1/\mu$ which is~$o(\alpha/\mu)$, and hence, negligible when $\alpha\ll\mu$, which explains why an~$\mathbb E[N(N-1)]$-driven analysis agrees with our~$\rnot$-driven analysis in this regime.

While we have focused on an artificial model in our discussion, it is important to note that the differences between the $\mathbb E[N(N-1)]$ and $\rnot$ metrics persist across a variety of settings (although their quantitative relationships can be setting-dependent).  We will discuss one other crucial difference between these metrics later in the paper: due to its abstraction of transmission dynamics, the $\mathbb E[N(N-1)]$ can exhibit insensitivity to scheduling policies in the presence of exponentially distributed service requirements, making it unsuitable for assessing scheduling-based interventions (see Section~\ref{sec:priorities}).

\section{Analytic Methodology and Worked Examples}\label{sec:analysis}
We find~$\rnot$ using a mix of transient and steady-state queueing analysis that tracks what happens during the sojourn of an infectious customer under the assumption that all other customers in the system during this sojourn are susceptible, and once a susceptible customer becomes infected they do not become infectious during their sojourn. Throughout this paper, we assume that customers arrive to an ergodic queueing system according to a Poisson process.  We further make the natural assumption that infectious customers are \emph{functionally indistinguishable} from their susceptible counterparts (e.g., their services times and the rules by which they are scheduled do not depend on their infectious/susceptible status).\footnote{Formally, if we index successive arrivals by natural numbers, then this assumption means that the random sequence of arrival-departure time pairs $\{(A_i,D_i)\}_{i\in\mathbb{N}}$ is independent of the sequence $\{I\{\mbox{arrival $i$ is infectious}\}_{i\in\mathbb{N}}$, where $A_i$ and $D_i$ are the arrival and departure times of customer $i$ and $I\{\cdot\}$ denotes the indicator function.  This assumption holds for arbitrary probabilistic structures on these sequences, so long as they are independent of one another.}

An infectious customer (henceforth, IC) arrives while seeing a system state~$s\in\mathcal S$, where~$\mathcal S$ represents a countable state space.  Let~$n(s)$ be the number of \emph{other} customers in the system when the IC arrives; for simple systems, such as~M/M/1, naturally the state represents the number of customers in the system, i.e.,~$s= n(s)$ for all~$s\in\mathcal S$. Furthermore, let~$\pi(s)$ be the limiting probability that the system is in state~$s$ under steady state.  By the PASTA property \citep[][Chapter 13.3]{harchol2013performance}, with probability~$\pi(s)$ the IC finds the system in state~$s$ and sees~$n(s)$ other customers in the system upon its arrival.  We index the~$n(s)$ customers by~$i\in\{1,2,\ldots,n(s)\}$ according to some convenient indexing scheme (e.g., we could let customer~$i$ be the~$i$-th to arrive among these~$n(s)$ customers).  Now denote by~$W_{i}^{(s)}$ the \emph{sojourn time overlap} between the IC and customer~$i$.  That is,~$W_{i}^{(s)}=\min(d_i,d)-a$, where~$d_i$ is the departure time of customer~$i$, and~$a$ and~$d$ are the arrival and departure times of the IC, respectively. Customer~$i$ becomes infected if and only if~$W_{i}^{(s)}>\theta_i$, where~$\theta_i$ is the random threshold such that the IC infects customer~$i$ if and only if their sojourns overlap for at least this duration of time; transmission thresholds~$\theta_1,\theta_2,\ldots,\theta_{n(s)}$ are assumed to be independent and identically distributed (for a discussion of what can be viewed as a relaxation of this assumption, see Section~\ref{sec:extension-het}). We write~$\theta$ to denote an arbitrary random variable drawn from the same distribution as~$\theta_i$ (for all~$i\in\{1,\ldots,n(s)\}$); we assume that~$\theta$ is independent of all other random variables of interest. The following key result allows for the computation of~$\rnot$:
\begin{Proposition}\label{prop:main}
The expected number of customers that an infectious customer will infect (assuming all other customers are susceptible) is given by
\begin{align}\label{eqn:r0}
\rnot&=2\sum_{s\in\mathcal S}\pi(s)\sum_{i=1}^{n(s)}\mathbb P\left(W_i^{(s)}\ge\theta\right),
\end{align}
where transmission threshold~$\theta$ can be any generally distributed threshold time.
\end{Proposition}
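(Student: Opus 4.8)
\emph{The plan.} I would decompose the customers that the infectious customer (henceforth IC) may infect into two disjoint groups and argue that, in expectation, the groups are of equal size. Group one consists of the $n(s)$ customers already present when the IC arrives into state $s$, indexed $i=1,\dots,n(s)$; group two consists of those customers who arrive after the IC (at time $a$) but before the IC departs (at time $d$). Since no customer can belong to both groups and any infected customer must overlap the IC's sojourn and hence lie in one of them, the expected number of infections equals the sum of the expected counts in the two groups. For group one, the PASTA property identifies the law of the state seen on arrival with the stationary distribution $\pi(s)$, and linearity of expectation then gives the expected group-one count as exactly $\sum_{s\in\mathcal S}\pi(s)\sum_{i=1}^{n(s)}\mathbb P(W_i^{(s)}\ge\theta)$. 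The whole proposition thus reduces to showing that group two contributes this same expected value, which is precisely where the factor of $2$ originates.

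\emph{Establishing forward--backward symmetry.} To equate the two groups I would avoid a direct analysis of the forward overlaps and instead use a pairwise-counting argument. Over a long horizon $[0,T]$, sum over every arriving customer, treated in turn as the IC with its own freshly drawn i.i.d.\ thresholds, the number of susceptibles it would infect; dividing by the number of arrivals and sending $T\to\infty$ recovers $\rnot$ by ergodicity. In this aggregate each pair of customers whose sojourns overlap is counted once as a group-one (backward) term, when the later-arriving member is the designated IC, and once as a group-two (forward) term, when the earlier-arriving member is the designated IC. The pairwise overlap $V=\min(d_c,d_{c'})-\max(a_c,a_{c'})$ is symmetric in the pair $\{c,c'\}$, and the functional-indistinguishability assumption makes the arrival--departure trajectories independent of who is infectious; since the thresholds are i.i.d.\ and independent of everything else, the infection indicator has conditional expectation $\mathbb P(V\ge\theta\mid V)$ under either designation. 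Hence the aggregate backward and forward counts have identical expectations, so group two contributes the same expected number of infections as group one, producing the factor of $2$.

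\emph{Main obstacle.} The step that I expect to demand the most care is the transition from this aggregate pair-counting identity back to a statement about a single, typical IC governed by the Palm (equivalently, via PASTA, the stationary) distribution; rigorously this requires ergodicity to pass from per-arrival averages to a Palm expectation, together with the functional-indistinguishability assumption, which is exactly what lets the same realization of trajectories be reused regardless of which customer is labeled infectious. Two subordinate points are routine but worth flagging. First, because I use only the marginal distribution of each overlap together with linearity of expectation, I never need the $n(s)$ infection events within a sojourn to be independent, and the argument holds verbatim for an arbitrarily distributed $\theta$. Second, the simultaneous-arrival and tied-departure boundary cases have probability zero in these continuous-time models and may be assigned to either group without affecting the expectations. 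For reversible systems such as the M/M/1 queue one could alternatively derive the symmetry through time reversal, but I would prefer the pair-counting argument since it assumes no reversibility and no structure beyond stationarity and functional indistinguishability.
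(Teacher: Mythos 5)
Your proposal is correct and follows essentially the same route as the paper's proof: the same decomposition into customers present at the IC's arrival versus those arriving during its sojourn, PASTA plus linearity of expectation for the first group, and the factor of $2$ obtained from forward--backward symmetry of overlapping pairs under the functional-indistinguishability assumption. The only difference is one of formalization---the paper (Appendix~\ref{app:double}) asserts that the before- and after-overlap counting processes of a tagged IC are equidistributed, justified by exactly the pair-exchangeability you invoke, whereas you reach the same identity through a long-run ergodic pair-counting average---so both arguments rest on the identical symmetry.
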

\begin{proof}{Proof of Proposition~\ref{prop:main}}
Given that the IC arrives to a system seeing state~$s$, we find the expected number of customers who will become infected by the IC \emph{among those present in the system upon the IC's arrival}.  In our model, the IC infects customer~$i\in\{1,2,\ldots,n(s)\})$ if and only if~$W_i^{(s)}\ge\theta_i$.  Hence, it follows from the linearity of expectation that the expected number of customers that IC will infect among the~$n(s)$ customers follows:
\[\mathbb E\left[\sum_{i=1}^{n(s)}I\left\{W_i^{(s)}\ge\theta_i\right\}\right]=\sum_{i=1}^{n(s)}\mathbb E\left[I\left\{W_i^{(s)}\ge\theta_i\right\}\right]=\sum_{i=1}^{n(s)}\mathbb P\left(W_i^{(s)}\ge\theta_i\right).\] 
Next, we assume that the system is in steady state (rather than assuming it is in a given state~$s$).  We then condition on the state~$s\in\mathcal S$ observed by the IC upon their arrival, which allows us to deduce that the expected number of (pre-existing) customers that will be infected by the IC is \begin{align}\label{eq:half}\sum_{s\in\mathcal S}\pi(s)\sum_{i=1}^{n(s)}\mathbb P\left(W_i^{(s)}\ge\theta\right).\end{align}
We prove Eq.~\ref{eqn:r0} by arguing that $\rnot$ is precisely equal to twice the quantity given above in Display~\eqref{eq:half} by way of a symmetry argument, where we show that the distribution of the number of customers that the IC infects among those who arrived to the system \emph{before} the IC is equal to (but not necessarily independent of) to the distribution of customers that the IC infects among those who arrived to the system \emph{after} the IC.  For further details, see Appendix~\ref{app:double}.\hfill~$\square$
\end{proof}

Proposition~\ref{prop:main} allows us to obtain~$\rnot$ exactly whenever we can compute the cumulative distribution function (CDF) of~$W_{i}^{(s)}$ and~$\pi(s)$ exactly for all~$s\in\mathcal S$ and~$i\in\{1,2,\ldots,n(s)\}$, although we may not be able to obtain a closed-form expression for~$\rnot$ when the state space~$\mathcal S$ is infinite.  As it turns out, for the exponential dose response model where transmission thresholds are exponentially distributed (i.e.,~$\theta\sim\expd(\alpha)$ for some transmission rate~$\alpha$), we can obtain closed-form expressions for~$\rnot$ for the~M/M/1 and~M/M/c models based on the Laplace transforms of the~$W_i^{(s)}$ random variables.

\begin{Corollary}\label{cor:main}
As long as transmission thresholds~$\theta\sim\expd(\alpha)$, then 
\begin{align}\label{eqn:r0exp}
\rnot&=2\sum_{s\in\mathcal S}\pi(s)\sum_{i=1}^{n(s)}\left(1-\widetilde{W}_i^{(s)}(\alpha_i)\right)=2\left(\mathbb E[N]-\sum_{s\in\mathcal S}\pi(s)\sum_{i=1}^{n(s)}\widetilde{W}_i^{(s)}(\alpha_i)\right),
\end{align}
where~$\widetilde{W}_i^{(s)}$ is the Laplace transform of~$W_i^{(s)}$ and~$\mathbb E[N]$ is the time-average number of customers in the system (i.e.,~$\mathbb E[N]=\sum_{s\in \mathcal S} n(s)\pi(s)$). 
\end{Corollary}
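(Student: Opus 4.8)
The plan is to start directly from the expression for $\rnot$ established in Proposition~\ref{prop:main} and reduce each probability $\mathbb P(W_i^{(s)}\ge\theta)$ to a Laplace-transform evaluation, exploiting the exponential form of the threshold. The entire argument rests on a single identity: for any nonnegative random variable $W$ independent of $\theta\sim\expd(\alpha)$, one has $\mathbb P(W\ge\theta)=1-\widetilde W(\alpha)$, where $\widetilde W$ denotes the Laplace transform of $W$. Since the thresholds $\theta_i$ are i.i.d.\ $\expd(\alpha)$, every rate $\alpha_i$ appearing in the statement is simply $\alpha$.

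First I would establish the identity by conditioning on the value of the overlap. Because $\theta$ is independent of $W_i^{(s)}$, conditioning on $W_i^{(s)}=w$ and invoking the exponential CDF gives $\mathbb P(\theta\le w)=1-e^{-\alpha w}$; taking expectations over $W_i^{(s)}$ then yields
\[\mathbb P\left(W_i^{(s)}\ge\theta\right)=\mathbb E\left[1-e^{-\alpha W_i^{(s)}}\right]=1-\mathbb E\left[e^{-\alpha W_i^{(s)}}\right]=1-\widetilde W_i^{(s)}(\alpha),\]
where the final step merely recognizes the definition of the Laplace transform. Substituting this expression into Eq.~\eqref{eqn:r0} immediately delivers the first equality of the Corollary.

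For the second equality, I would split the resulting double sum into two pieces. The piece $2\sum_{s\in\mathcal S}\pi(s)\sum_{i=1}^{n(s)}1$ collapses via $\sum_{i=1}^{n(s)}1=n(s)$ to $2\sum_{s\in\mathcal S}n(s)\pi(s)=2\mathbb E[N]$, using precisely the stated identity $\mathbb E[N]=\sum_{s\in\mathcal S}n(s)\pi(s)$. The remaining piece is exactly the subtracted Laplace-transform sum, which completes the derivation.

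Because every step is an elementary manipulation, there is no serious obstacle here; the only point requiring any care is the conditioning argument and the interchange of expectation, both justified by the assumed independence of $\theta$ from each $W_i^{(s)}$ together with nonnegativity of the integrand (so Tonelli applies without incident). The result is best viewed not as a deep theorem but as a convenient computational reformulation: it trades the CDF-level event $\{W_i^{(s)}\ge\theta\}$ for Laplace transforms, which are typically far easier to obtain in closed form for the overlap variables $W_i^{(s)}$ arising in the M/M/1 and M/M/$c$ settings analyzed in the sequel.
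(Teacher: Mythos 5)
Your proposal is correct and follows essentially the same route as the paper: the paper's proof likewise substitutes the identity $\mathbb P(X<\theta)=\widetilde X(\alpha)$ (which it cites from a textbook exercise rather than deriving via conditioning, as you do) into Eq.~\eqref{eqn:r0} and then uses $\mathbb E[N]=\sum_{s\in\mathcal S}n(s)\pi(s)$ to split off the second equality. Your explicit derivation of the identity and the Tonelli remark are fine but add nothing beyond what the paper assumes.
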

\begin{proof}{Proof of Corollary~\ref{cor:main}}
Eq.~\ref{eqn:r0exp} follows directly from Eq.~\ref{eqn:r0} noting that (i) for any random variable~$X$ for which a Laplace transform~$\widetilde X$ exists and any random variable~$\theta\sim\expd(\alpha)$ independent of~$X$, we have~$\mathbb  P(X<\theta)=\widetilde X(\alpha)$ \citep[see exercise 25.7 of][]{harchol2013performance} and (ii)~$\mathbb E[N]=\sum_{s\in\mathcal S} n(s)\pi(s).$ \hfill~$\square$
\end{proof}

\subsection{Finding~$\rnot$ for the M/M/1 model}
\label{sec:mm1}
We now turn to using Eq.~\ref{eqn:r0exp} from Corollary~\ref{cor:main} in order to derive~$\rnot$ for the classic M/M/1 first-come-first-serve (FCFS) queueing systems where customers arrive with rate~$\lambda$ and are served with rate~$\mu$ and the system load~$\rho\equiv\lambda/\mu$. The M/M/1 system is a special case of the M/M/$c$ system (which we will analyze later), but we examine this special case separately for illustrative purposes. We introduce the \emph{normalized transmission rate}~$\eta\equiv\alpha/\mu$, where~$1/\eta$ corresponds to the average number of service durations that an infectious customer's sojourn will need to overlap with that of a susceptible customer before the former infects the latter. We denote each state by~$s$, the number of customers in the system (i.e.,~$n(s)=s$) and index customers in arrival order (e.g.,~$s=3$ denotes that customer 1 is in service and customers 2 and 3 are waiting); the state space is~$\mathcal S=\{0,1,2,\ldots\}$.
\begin{Proposition}\label{prop:mm1}
In an M/M/1/FCFS system with~load~$\rho\equiv\lambda/\mu$, transmission threshold~$\theta\sim\expd(\alpha)$, and normalized transmission rate~$\eta\equiv\alpha/\mu$, we have~$W_i^{(s)}\sim\erld(i,\mu)$ and~$\widetilde{W}_i^{(s)}(\alpha)=1/(\eta+1)^i$ for all~$s\in\mathcal S$ and~$i\in\{1,2,\ldots,s\}$, while  \[\rnot=2\left(\frac{\rho}{1-\rho}\right)\left(\frac{\eta}{\eta+1-\rho}\right),\] which is convex increasing in $\lambda$, convex decreasing in $\mu$, and concave increasing in $\alpha$; the same properties also apply to $\lambda\rnot$.
\end{Proposition}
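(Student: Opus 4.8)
The plan is to proceed in four stages: pin down the distribution of each overlap $W_i^{(s)}$, read off its Laplace transform, evaluate the double sum in Corollary~\ref{cor:main} to get the closed form, and then verify the six monotonicity/curvature claims (and their analogues for $\lambda\rnot$). First I would establish the overlap distribution. Because service is FCFS and the IC arrives \emph{after} all $s$ customers already present, the IC cannot depart until every one of them has left, so $\min(d_i,d)=d_i$ and $W_i^{(s)}=d_i-a$ is exactly the residual sojourn of customer $i$ measured from the IC's arrival epoch $a$. By the memorylessness of the exponential service requirement, the residual service of the customer in service is $\expd(\mu)$, and customer $i$ must await $i$ such independent completions (the residual service of customer $1$ plus the full services of customers $2,\ldots,i$). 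Hence $W_i^{(s)}\sim\erld(i,\mu)$, independent of $s$, with Laplace transform $\widetilde W_i^{(s)}(\alpha)=(\mu/(\mu+\alpha))^i=1/(\eta+1)^i$ since $\eta=\alpha/\mu$.

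Next I would substitute $\pi(s)=(1-\rho)\rho^s$, $n(s)=s$, and this transform into Eq.~\ref{eqn:r0exp}. The inner sum $\sum_{i=1}^s\bigl(1-(\eta+1)^{-i}\bigr)$ splits into $s$ minus a finite geometric series equal to $\eta^{-1}\bigl(1-(\eta+1)^{-s}\bigr)$. Weighting by $(1-\rho)\rho^s$ and summing over $s$ then requires only the three standard power series $\sum_s\rho^s s=\rho/(1-\rho)^2$, $\sum_s\rho^s=1/(1-\rho)$, and the geometric sum $\sum_s\bigl(\rho/(\eta+1)\bigr)^s=(\eta+1)/(\eta+1-\rho)$. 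Collecting terms and factoring out $\rho/(1-\rho)$ should yield the claimed $\rnot=2\bigl(\rho/(1-\rho)\bigr)\bigl(\eta/(\eta+1-\rho)\bigr)$. This stage is pure bookkeeping; the only care needed is to keep the $\eta$-dependent geometric series separate from the arithmetic one.

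For the curvature analysis I would first rewrite $\rnot$ in the raw parameters as $\rnot=2\lambda\alpha/\bigl((\mu-\lambda)(\mu+\alpha-\lambda)\bigr)$ and then perform the partial-fraction split $\rnot=2\lambda\bigl(1/(\mu-\lambda)-1/(\mu+\alpha-\lambda)\bigr)$, which I expect to be the key device throughout. Monotonicity then follows almost immediately: each reciprocal is increasing in $\lambda$ and decreasing in $\mu$, the coefficient $2\lambda$ is positive and increasing, and the difference is positive and increasing in $\alpha$ through the $-1/(\mu+\alpha-\lambda)$ term; so $\rnot$ is increasing in $\lambda$ and $\alpha$ and decreasing in $\mu$. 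Concavity in $\alpha$ drops out by differentiating twice to get $-4\lambda/(\mu+\alpha-\lambda)^3<0$, and convexity in $\mu$ follows after the substitution $v=\mu-\lambda$, which turns $\rnot$ into $2\lambda\bigl(1/v-1/(v+\alpha)\bigr)$ with manifestly positive second derivative $2\lambda\bigl(2/v^3-2/(v+\alpha)^3\bigr)$.

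The genuine obstacle is convexity in $\lambda$, where the difference structure bites: rewriting $2\lambda/(\mu-\lambda)=2\mu/(\mu-\lambda)-2$ (and similarly for the other term) recasts $\rnot$ as $2\mu/(\mu-\lambda)-2(\mu+\alpha)/(\mu+\alpha-\lambda)$, whose second $\lambda$-derivative is $4\mu/(\mu-\lambda)^3-4(\mu+\alpha)/(\mu+\alpha-\lambda)^3$ — a difference of two positive quantities whose sign is not obvious. I would settle it by showing the auxiliary map $c\mapsto c/(c-\lambda)^3$ is decreasing (its derivative has numerator $-2c-\lambda<0$) and evaluating at $c=\mu$ versus $c=\mu+\alpha$, which forces the difference to be nonnegative. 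Finally, for $\lambda\rnot$ the $\mu$- and $\alpha$-properties are inherited, since as a function of $\mu$ or $\alpha$ it is merely the positive constant $\lambda$ times $\rnot$, and $\lambda\rnot$ is increasing in $\lambda$ as a product of positive increasing functions; the only new computation is its convexity in $\lambda$, for which I would reduce $2\lambda^2/(\mu-\lambda)$ to $2\mu^2/(\mu-\lambda)$ plus a term linear in $\lambda$ so that the linear parts cancel in the difference, leaving a second derivative $4\mu^2/(\mu-\lambda)^3-4(\mu+\alpha)^2/(\mu+\alpha-\lambda)^3$ that I would sign by the same trick applied to $c\mapsto c^2/(c-\lambda)^3$.
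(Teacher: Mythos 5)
Your proposal is correct and follows essentially the same route as the paper: the same FCFS/memorylessness argument giving $W_i^{(s)}\sim\erld(i,\mu)$ (independent of $s$), the same Laplace transform $1/(\eta+1)^i$, and the same geometric-series evaluation of the double sum from Corollary~\ref{cor:main} leading to $\rnot=2\left(\frac{\rho}{1-\rho}\right)\left(\frac{\eta}{\eta+1-\rho}\right)$. The only difference is in the curvature claims, which the paper dismisses as verifiable ``in a straightforward manner by taking first and second derivatives,'' whereas you actually supply the verification---your partial-fraction form $\rnot=2\lambda\left(\frac{1}{\mu-\lambda}-\frac{1}{\mu+\alpha-\lambda}\right)$ and the decreasing auxiliary maps $c\mapsto c/(c-\lambda)^3$ and $c\mapsto c^2/(c-\lambda)^3$ are a clean way to settle the one genuinely non-obvious sign (convexity in $\lambda$), which the paper leaves implicit.
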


\begin{proof}{Proof of Proposition~\ref{prop:mm1}.}
This result is derived by combining traditional~M/M/1 analysis together with an examination of~$W_i^{(s)}$. Since the service policy is FCFS,~$W_i^{(s)}$ denotes the \emph{remaining} sojourn time of customer~$i$ given that there are~$s$ customers in the system when the infected customer arrives, and so,~$W_i^{(s)}\sim\erld(i,\mu)$ for all~$i\in\{1,2,\ldots,s\}$.  Hence, recalling that~$\eta\equiv\alpha/\mu$ \[\widetilde{ W}_i^{(s)}(\alpha)=\left(\frac{\mu}{\alpha+\mu}\right)^i=\left(\frac1{1+\eta}\right)^i,\] which together with the fact that~$\pi(s)=(1-\rho)\rho^s$ and~$\mathbb E[N]=\rho/(1-\rho)$ in an M/M/1 system, let us use Eq.~\eqref{eqn:r0exp} to prove the claim as follows:
\begin{align*}
\rnot&=2\left(\frac{\rho}{1-\rho}-(1-\rho)\sum_{s=0}^\infty\rho^s\sum_{i=1}^s\left(\frac{1}{\eta+1}\right)^i\right)\\
&=2\left(\frac{\rho}{1-\rho}-\left(\frac{1-\rho}{\eta}\right)\sum_{s=0}^\infty\rho^s\left(1-\left(\frac1{1+\eta}\right)^s\right)\right)
=2\left(\frac{\rho}{1-\rho}\right)\left(\frac{\eta}{\eta+1-\rho}\right).\end{align*}
The claims that $\rnot$ and $\lambda\rnot$ are convex increasing, convex decreasing, and concave increasing in $\lambda$, $\mu$, and $\alpha$, respectively, can be verified in a straightforward manner by taking first and second derivatives.
\hfill~$\square$
\end{proof}

\subsection{Finding~$\rnot$ for the M/M/c model}
\label{sec:mmc}
We now turn our attention to the~M/M/$c$ system. We again denote the system load by~$\rho$, although crucially, we now have~$\rho\equiv\lambda/(c\mu)$ as we are considering a~$c$-server system. Furthermore, we again denote each state by~$s$, the number of customers in the system (i.e.,~$n(s)=s$) and index customers in arrival order (e.g., in an M/M/2 system~$s=3$ denotes that two customers are in service and one is waiting in the queue, while in an M/M/4 system~$s=3$ denotes that three customers are in service and one server is free); the state space is~$\mathcal S=\{0,1,2,\ldots\}$. Unlike the M/M/1 system, customers in the first-come-first-serve M/M/$c$ system do not necessarily \emph{depart} in the order in which they arrive (FCFS only guarantees that customers \emph{enter service} in the order in which they arrive). As a result, we must consider three types of ``pairs'' that can be formed by the IC and a susceptible customer in position~$i$, based on the current system state:
\begin{enumerate}
    \item The case where~$1\le s< c$, i.e., when the IC finds at least one server serving a susceptible customer (customer~$i$) and at least one free server, allowing the IC to immediately enter service,
    \item The case where~$i\le c\le s$, i.e., when the IC finds all servers busy and joins the queue and we consider the overlap of its sojourn with that of a susceptible customer (customer~$i$) in service, and
    \item The case where~$c<i\le s$, i.e., when we have a situation like that in the previous case except for the fact that the susceptible customer (customer~$i$) is now in the queue.
\end{enumerate}
Careful analysis yields the~$W_i^{(s)}$ distributions, which can then be used in conjunction with standard M/M/$c$ analysis to obtain a closed-form expression for~$\rnot$ in terms of the Erlang-C formula,~$C(c,\rho)$, as presented in the following proposition:

\begin{Proposition}\label{prop:mmc}
Consider an M/M/$c/FCFS$ system with load~$\rho\equiv\lambda/(c\mu)$, transmission threshold~$\theta\sim\expd(\alpha)$, and normalized transmission rate~$\eta\equiv\alpha/\mu$.  In such a system, the sojourn time overlap between the IC and customer~$i$ has the following Laplace transform:
\[\widetilde{W}_i^{(s)}(\alpha)=\begin{cases}2/(\eta+2)& 1\leq s<c\\\left(\eta \left(\dfrac{c-1}{\eta+c}\right)^{s-c+1}+\eta+2\right)\left(\dfrac{1}{\eta^
2+3\eta+2}\right)& i\leq c\leq s\\[10pt]\left(\dfrac{c}{\eta+c }\right)^{i-c} \left(\eta \left(\dfrac{c-1}{\eta+c}\right)^{s-i+1}+\eta+2\right)\left(\dfrac{1}{\eta^
2+3\eta+2}\right)&c<i\leq s\end{cases},\] for all~$s\in\mathcal S$ and~$i\in\{1,2,\ldots,s\}$, while  \[\rnot=2\left(\left(\dfrac{\rho}{1-\rho}\right)C(c,\rho)+c\rho-\dfrac{1}{\eta +2 } \left(C(c,\rho)\left(\dfrac{2c\rho-c\eta}{\eta+c-c\rho}\right)+2c\rho \right)\right),\] where~$C(c,\rho)\equiv\dfrac{(c\rho)^c}{(1-\rho)c!}{\left(\displaystyle{\sum_{s=0}^{c-1}}{\dfrac{{(c\rho)}^{s}}{s!}+\dfrac{(c\rho)^c}{(1-\rho)c!}}\right)}^{-1}$ denotes the Erlang-C formula.
\end{Proposition}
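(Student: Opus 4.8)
The plan is to use Corollary~\ref{cor:main}, so the entire problem reduces to two tasks: (i) deriving the Laplace transform $\widetilde{W}_i^{(s)}(\alpha)$ in each of the three cases enumerated before the proposition, and (ii) summing $2(\mathbb{E}[N]-\sum_s\pi(s)\sum_i\widetilde{W}_i^{(s)}(\alpha))$ against the known M/M/$c$ stationary distribution to extract the closed form involving $C(c,\rho)$. The substantive work lies entirely in step (i), since step (ii) is a (lengthy but routine) geometric-series computation once the transforms are in hand.

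For step (i), I would analyze $W_i^{(s)}=\min(d_i,d)-a$ by decomposing the IC's sojourn and customer $i$'s remaining sojourn into phases governed by exponential clocks, exploiting memorylessness throughout. In the first case ($1\le s<c$), the IC enters service immediately, so both the IC and customer $i$ are simultaneously in service with independent $\expd(\mu)$ residual service times; the overlap is $\min$ of two independent exponentials, which is $\expd(2\mu)$, giving transform $2\mu/(\alpha+2\mu)=2/(\eta+2)$. In the second case ($i\le c\le s$), customer $i$ is in service (residual time $\expd(\mu)$ by memorylessness) while the IC waits behind $s-c+1$ customers in queue before entering service; here I would track the overlap as the time until whichever event comes first---customer $i$ departing, or the IC departing---and condition on the race between customer $i$'s service completion and the IC reaching/completing service. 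The third case ($c<i\le s$) is similar but customer $i$ must first advance through the queue to a server (contributing the factor $(c/(\eta+c))^{i-c}$, reflecting $i-c$ service-completion epochs each thinned by the transmission clock) before the case-2 dynamics apply to the residual.

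The hard part will be case~(ii) and case~(iii): because departures in M/M/$c$ occur at total rate $c\mu$ while any single tagged customer completes at rate $\mu$, I must carefully set up the competing-exponential structure to see how the IC's position in queue and customer $i$'s position interact with the pooled service process. The cleanest route is to condition sequentially on the ordered sequence of departure events (each occurring at rate $c\mu$ while the queue is saturated, and the completing customer being uniformly one of the $c$ in service), and to note that the transmission threshold $\theta\sim\expd(\alpha)$ acts as an independent competing clock so that $\widetilde{W}_i^{(s)}(\alpha)=\mathbb{P}(W_i^{(s)}<\theta)$ is literally the probability that the $\theta$-clock fires before the overlap ends. This reframing turns each case into a product/sum of simple ``winning the race'' probabilities of the form (rate)/(rate~$+\alpha$), and the exponents $s-c+1$ and $s-i+1$ emerge as the number of such races the IC (resp.\ customer $i$) must survive. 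I would verify consistency by checking that setting $c=1$ recovers $\widetilde{W}_i^{(s)}(\alpha)=1/(\eta+1)^i$ from Proposition~\ref{prop:mm1}.

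Once all three transforms are established, I would assemble $\rnot$ by splitting the outer sum $\sum_{s}\pi(s)\sum_{i=1}^{s}\widetilde{W}_i^{(s)}(\alpha)$ according to whether $s<c$ or $s\ge c$, using the standard M/M/$c$ formulas $\pi(s)=\pi(0)(c\rho)^s/s!$ for $s\le c$ and $\pi(s)=\pi(0)c^c\rho^s/c!$ for $s\ge c$, together with $\mathbb{E}[N]=c\rho+\rho C(c,\rho)/(1-\rho)$. The inner sums over $i$ are finite geometric series (in the factors $1/(\eta+1)$-type ratios and $(c/(\eta+c))^{i-c}$), and the outer sum over $s\ge c$ is a convergent geometric series in $\rho$ and in $(c-1)/(\eta+c)$; collecting terms and repeatedly invoking the definition of $C(c,\rho)$ to absorb the $\pi(0)$ normalization should yield the stated closed form. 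This final bookkeeping is algebraically heavy but mechanical, and I expect no conceptual obstruction beyond careful tracking of the Erlang-C factor.
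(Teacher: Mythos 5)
Your proposal is correct and matches the paper's proof in essentially every respect: case 1 is identical, your ``sequential conditioning on departure events with the departing customer uniform among the $c$ in service'' is precisely the Coxian first-step recursion the paper uses in Appendix~\ref{app:ws} (which is also how the paper handles the dependence you flag between the IC's queueing time and customer~$i$'s residual service), your case-3 factorization $\left(c/(\eta+c)\right)^{i-c}\widetilde{W}_c^{(s-(i-c))}(\alpha)$ is the paper's $Q+V$ decomposition, and the final assembly via Corollary~\ref{cor:main} with the standard M/M/$c$ stationary distribution is the paper's step as well. One trivial verbal slip worth fixing: $\widetilde{W}_i^{(s)}(\alpha)=\mathbb{P}\bigl(W_i^{(s)}<\theta\bigr)$ is the probability the overlap ends \emph{before} the $\theta$-clock fires (no transmission), not the reverse, which is consistent with your race factors of the form $\mathrm{rate}/(\mathrm{rate}+\alpha)$ and with your correct $c=1$ sanity check.
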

\begin{proof}{Proof of Proposition~\ref{prop:mmc}.}
It is known that for an M/M/$c$ system  \[\pi(s)=\begin{cases}\dfrac{c!(1-\rho)}{s!(c\rho)^{c-s}}C(c,\rho)&0\le s\le c\\\dfrac{1-\rho}{\rho^{c-s}}C(c,\rho)&s>c,
\end{cases}\] and~$\mathbb E[N]=\dfrac{\rho}{1-\rho}C(c,\rho)+c\rho$ \citep[][chapter 14]{harchol2013performance}.  These two facts, together with the claimed values of~$\widetilde W_i^{(s)}(\alpha)$ and Eq.~\eqref{eqn:r0exp}, yield
\begin{align*}
    \rnot&=2\left(\mathbb E[N] -\sum_{s=0}^{\infty}{\pi(s)\sum_{i=1}^{s}{\widetilde W_i^{(s)}(\alpha)}}\right)\\
    &= 2\left(\mathbb E[N] -\sum_{s=0}^{c}{\pi(s)\sum_{i=1}^{s}{\widetilde W_i^{(s)}(\alpha)}}-\sum_{s=c+1}^{\infty}{\pi(s)\left(\sum_{i=1}^{c}{\widetilde W_i^{(s)}(\alpha)}+\sum_{i=c+1}^{s}{\widetilde W_i^{(s)}(\alpha)}\right)}\right)\\
    &= 2\left(\left(\dfrac{\rho}{1-\rho}\right)C(c,\rho)+c\rho-\dfrac{1}{\eta +2 } \left(C(c,\rho)\left(\dfrac{2c\rho-c\eta}{\eta+c-c\rho}\right)+2c\rho \right)\right),
\end{align*}
as claimed.  It remains only to prove that $\widetilde W_i^{(s)}(\alpha)$ is as claimed; we defer the proof of this claim to Appendix~\ref{app:ws}. \hfill~$\square$
\end{proof}

\section{Interventions}\label{sec:interventions}
In this section we consider three interventions aimed at reducing transmission risk in service facilities during a pandemic. They include limiting occupancy (section~\ref{sec:limitedOccupancy}), prioritizing high-risk customers (section~\ref{sec:priorities}), and increasing service rates (Section~\ref{sec:serviceRate}). These are some of the common recommendations by local governments and commonly practiced interventions by the service facilities during the COVID-19 pandemic. Our purpose in this section is to illustrate how the modeling paradigm laid out and elaborated upon in the previous two sections can help inform real-world decision making. While we share and comment upon insights that these illustrations reveal, the primary purpose of this section is not these insights in and of themselves, but rather an illustration of the type of insights that the paradigm presented in this paper can uncover, and more generally the type of questions it can help answer.  While in principle the interventions discussed here can be combined (i.e., implemented simultaneously), in the interest of brevity, we restrict attention to implementing these interventions one at a time.

\subsection{Intervention 1: Limited Occupancy}
\label{sec:limitedOccupancy}
Limiting the number of customers in business establishments was one of the highly practiced interventions during the COVID-19 epidemic, especially in grocery retailers~\citep{shumsky2020safeshopping}. For example, the New York State Department of Health guidance advises grocery retailers to limit their store occupancy, at any given time, to~50\% of their maximum capacity, inclusive of employees~\citep{NYSDOH}. Meanwhile, the United Food and Commercial Workers International Union made a recommendation to the Center for Disease Control and Prevention (CDC) to mandate grocery and drug stores to limit occupancy to~20\%--30\% of their maximum capacity~\citep{Redman2020UFCW}.

Therefore, our first intervention focuses on limiting the occupancy of the service facility. We study this intervention by modeling the service facility as an~M/M/$c$/$k$ system (a multi-server system with finite buffer), for all integer values the maximum \emph{system} occupancy~$k\ge c$; hence, the maximum queue length is~$K=k-c$. The analysis of~$\rnot$ for this model follows from a straightforward adaptation of the analysis presented in Sections~\ref{sec:mm1}~and~\ref{sec:mmc}, as the sojourn time overlap random variables~$W_i^{(s)}$ follow the same distributions as those followed by the analogous random variables in the~M/M/$c$ model (and they also agree with those that we found for the~M/M/1 model when~$c=1$, i.e.,~$W_i^{(s)}\sim\erld(i,\mu)$).  In this setting, for any given maximum system occupancy~$k$, we can evaluate~$\rnot$ in a closed form involving sums of finitely many terms.
\begin{Proposition}\label{prop:mmck}
In an~M/M/$c$/$k$/FCFS system with load~$\rho\equiv\lambda/(c\mu)<1$ and transmission threshold~$\theta\sim\expd(\alpha)$, 
\begin{align}
\rnot&=2\left(\sum_{s=0}^{c}{\pi(s)\sum_{i=1}^{s}{\widetilde W_i^{(s)}(\alpha)}}+\sum_{s=c+1}^{k-1}{\pi(s)\left(\sum_{i=1}^{c}{\widetilde W_i^{(s)}(\alpha)}+\sum_{i=c+1}^{s}{\widetilde W_i^{(s)}(\alpha)}\right)}\right),
\end{align}
where
\begin{align}
\pi(s)=\begin{cases}\dfrac{(c\rho)^{s}}{s!}{\left(\sum_{s=0}^{c}{\dfrac{(c\rho)^{s}}{s!}}+\dfrac{c^c}{c!}\sum_{s=c+1}^{k}{\rho^s}\right)}^{-1}&0\le s\le c\\\dfrac{c^{c}\rho^{s}}{c!}{\left(\sum_{s=0}^{c}{\dfrac{(c\rho)^{s}}{s!}}+\dfrac{c^c}{c!}\sum_{s=c+1}^{k}{\rho^s}\right)}^{-1}&c<s\leq k
\end{cases},
\label{eq:mmckPis}
\end{align}
and $\widetilde{W}_i^{(s)}(\alpha)$ is as given in Proposition~\ref{prop:mmc}.
\end{Proposition}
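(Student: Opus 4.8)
The plan is to reduce Proposition~\ref{prop:mmck} to the apparatus already built for the unrestricted M/M/$c$ queue, isolating the only two features that the finite buffer genuinely introduces: a renormalized (finite) stationary distribution, and the blocking of arrivals that find the system full. The first step is to argue that, conditional on the IC finding $s$ other customers present, the sojourn time overlaps $W_i^{(s)}$ have exactly the same law as in the M/M/$c$ model of Proposition~\ref{prop:mmc}, so that their Laplace transforms $\widetilde W_i^{(s)}(\alpha)$ may be quoted verbatim. The key observation is that, under FCFS, the departure epoch of the IC and of every customer already present at the IC's arrival is a function of the service-completion process alone: while the IC waits, all $c$ servers are necessarily busy (a customer leaves the queue only by entering service), so the IC advances one position per service completion, and each pre-existing customer's residual sojourn likewise depends only on service completions and on the customers ahead of it. Arrivals occurring after the IC---whether they queue (M/M/$c$) or are blocked (M/M/$c$/$k$)---never change the service order or the departure times of the IC or of the pre-existing customers, so the joint law of the relevant departure epochs given state $s$ is identical across the two systems and the three-case expression for $\widetilde W_i^{(s)}(\alpha)$ transfers directly.

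With the overlap transforms in hand, I would next record that the M/M/$c$/$k$ chain is a finite birth--death process whose detailed-balance equations, once renormalized over $\{0,1,\ldots,k\}$, give the stated $\pi(s)$ in Eq.~\eqref{eq:mmckPis}; this is a standard computation. I would then apply Corollary~\ref{cor:main} (Eq.~\eqref{eqn:r0exp}) over the finite state space. By PASTA the IC finds state $s$ with probability $\pi(s)$, but an IC that finds the system full ($s=k$) is blocked, never enters, and hence infects no one; this term vanishes, which is exactly why the outer sums terminate at $k-1$ rather than $k$. For each contributing state $s\le k-1$ I would split the pre-existing customers into those in service and those waiting, substitute the inherited $\widetilde W_i^{(s)}(\alpha)$ in each of the two regimes, and collect terms to obtain the claimed expression.

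The step requiring the most care is the factor of $2$, which comes from the before/after symmetry of Proposition~\ref{prop:main} (Appendix~\ref{app:double})---the statement that the IC infects, in expectation, as many customers who arrive after it as customers already present at its arrival. I would re-establish this symmetry for the finite buffer by appealing to the reversibility of the birth--death chain: the time reversal of a stationary M/M/$c$/$k$ process is again an M/M/$c$/$k$ process with the same rates, under which the future (possibly blocked) arrivals overlapping the IC correspond to the pre-existing customers. The delicate point is confirming that blocking---through which the IC's own presence can cause a later would-be arrival to be turned away---does not break this correspondence; reversibility is precisely what guarantees that the effect on the ``after'' side mirrors that on the ``before'' side, so the factor of $2$ carries over unchanged from the unbounded case.
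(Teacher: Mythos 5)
Your proposal is correct and follows essentially the same route as the paper's proof, which likewise observes that the finite buffer leaves the FCFS sojourn-time overlaps---and hence the $\widetilde W_i^{(s)}(\alpha)$ of Proposition~\ref{prop:mmc}---unchanged, quotes the standard M/M/$c$/$k$ stationary distribution, and substitutes into Eq.~\eqref{eqn:r0exp}, the $s=k$ term vanishing because a blocked IC has zero sojourn. Your reversibility argument for the factor of $2$ is harmless but unnecessary, since Proposition~\ref{prop:main} was established for any ergodic system with functionally indistinguishable infectious customers and so already covers the finite buffer (note also that applying Eq.~\eqref{eqn:r0exp} as you describe produces summands $1-\widetilde W_i^{(s)}(\alpha)$, so the proposition's display as printed appears to omit the ``$1-{}$'').
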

\begin{proof}{Proof of Proposition~\ref{prop:mmck}.}
The buffer size does not affect the distribution of the sojourn time overlap  under the FCFS policy. Therefore,~$\widetilde{W}_i^{(s)}(\alpha)$ remains the same as in the case of Proposition~\ref{prop:mmc}. Also, the steady-state probability distribution of an M/M/$c$/$k$ system is known to follow Eq.~\eqref{eq:mmckPis} \citep[][Chapter~3]{shortle2018queueing}. Using these in Eq.~\eqref{eqn:r0exp} we can establish the claimed result.\hfill$\square$
\end{proof}

After the imposition of occupancy limitations, some services including retailers were concerned about the impact of this mandate on their foot traffic and revenue~\citep{Pacheco2020}. Occupancy limitations have public health benefits and curb the spread of virus, but come at the cost of lost customers for some service facilities. As  an  example  of  how  our  methodology  can highlight this trade-off, Fig. 1 shows the trade-off between~$\rnot$ and the loss probability as the occupancy limit changes. The left figure shows three trade-off curves for various mean transmission thresholds~$1/\alpha$, whereas the right figure shows three trade-off curves for various system loads~$\rho$. Clearly, limiting the number of customers comes at the cost of turning away more customers. We see that more strict occupancy limits result in a larger reduction in~$\rnot$ for shorter mean transmission thresholds~$1/\alpha$ (left figure) and for higher system loads~$\rho$ (right figure).
\begin{figure}[]
\centering
\includegraphics[width=0.49\textwidth]{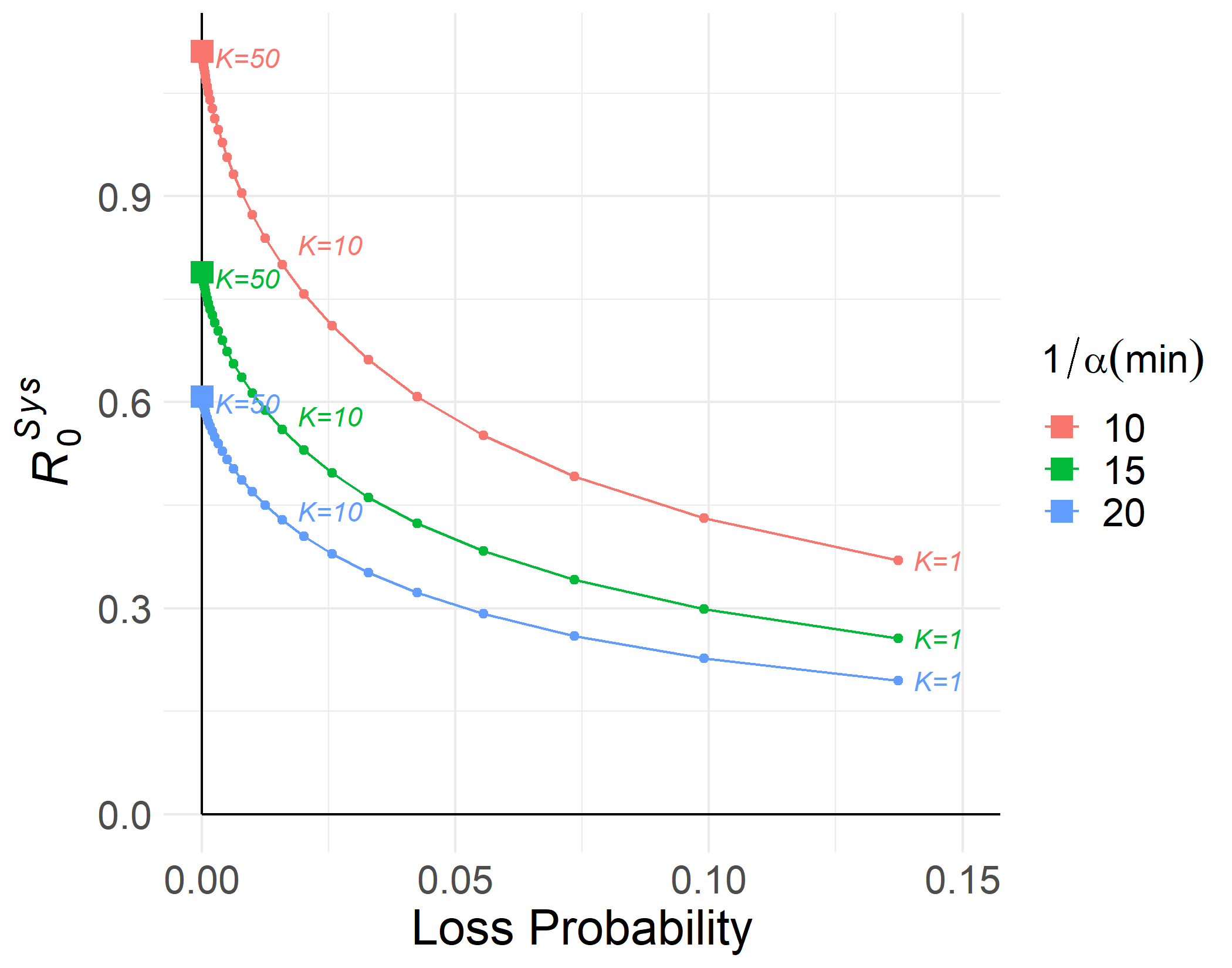}
\includegraphics[width=0.49\textwidth]{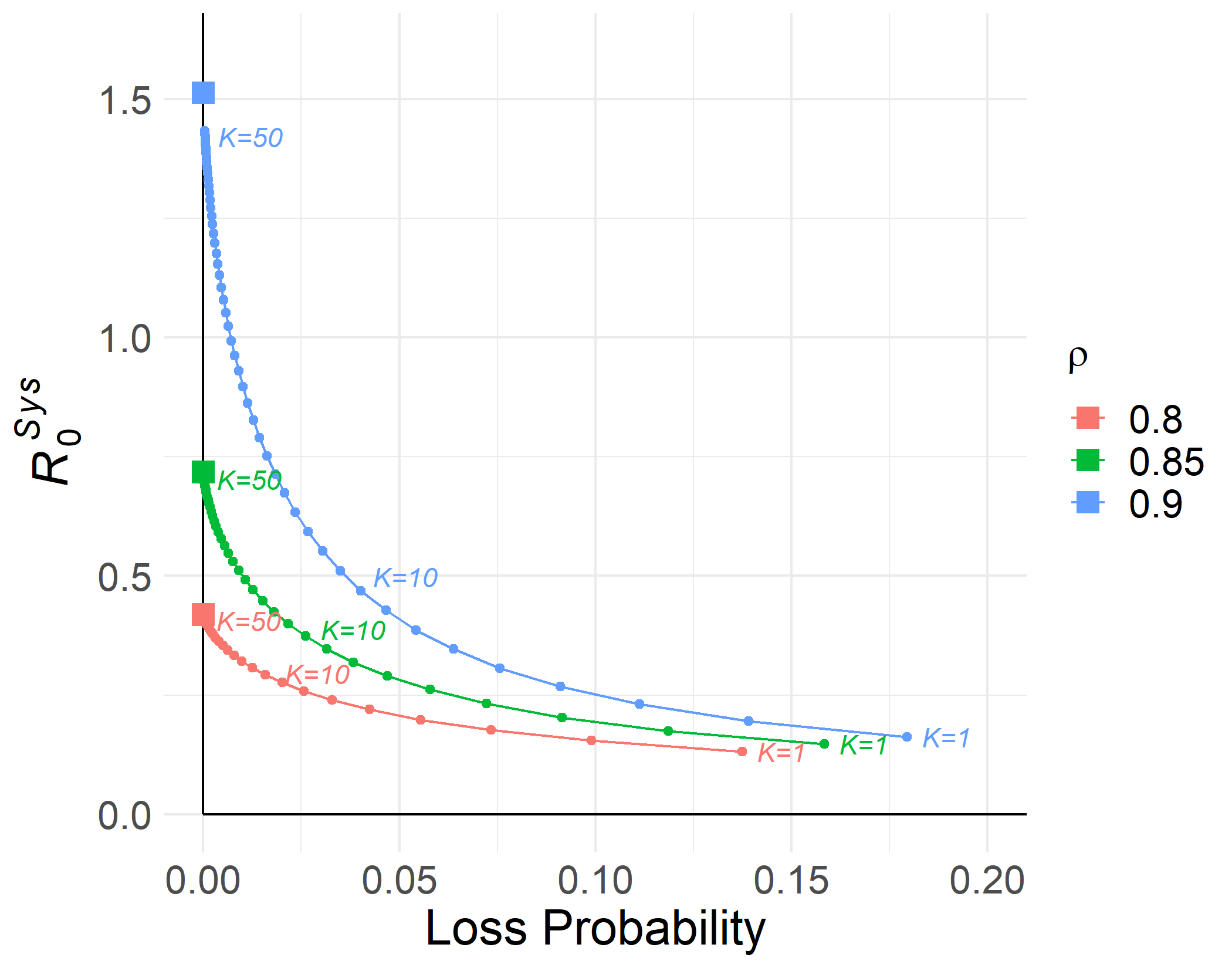}
\caption{The~$\rnot$ versus loss probability trade-off as the buffer size~$K = k-c$ changes. Left figure:~$\lambda=5$ arrivals per minute,~$c=2$, and~$\rho = 0.8$. Right figure:~$\lambda=5$ arrivals per minute,~$c=2$, and~$1/\alpha = 30$ minutes. The square marks on the vertical axis correspond to the case where~$K=\infty$ (i.e., the~M/M/$c$) system.}
\label{fig:tradeoff}
\end{figure}

We also observe that imposing a reasonable occupancy limit could reduce~$\rnot$ substantially at a small loss, compared to the infinite-buffer (M/M/$c$) system. For example, we can see that for system load~$\rho=0.9$ (right figure) as the occupancy limit is set at $k=12$ (i.e., $K=10$), $\rnot$ drops by almost $66\%$ (from~$\rnot=1.5130$ to~$\rnot=0.5104$) while the loss probability rises by less than~$5\%$. As the occupancy limit decreases, the reduction in~$\rnot$ comes at a much higher loss.

\subsection{Intervention 2: Protecting High-Risk Customers}
\label{sec:priorities}
Different individuals face different likelihoods of severe detrimental outcomes (e.g., serious illness, hospitalization, or death) once they become infected.  For example, older individuals and those with specific comorbidities such as hypertension or diabetes face much greater morbidity rates from COVID-19~\citep{Paudel2020,Sanyaolu2020, zhou2020clinical}. To this end, another intervention practiced during the COVID-19 pandemic by various service facilities including retailers, such as Walmart and Whole Foods, was to provide priority service for high-risk customers~\citep{Landry2020Amazon,Thakker2020}. Firms devised different strategies for this purpose. Examples from grocery retailers include reserving the first shopping hour for high-risk customers and prioritizing their curbside pickup orders~\citep{Amazon}.

Accordingly, we discuss interventions that aim to reduce the risk of infection for the high-risk population of customers. We formally consider two types of customers (high- and low-risk) arriving to the service facility.  We then measure the risk posed to each group under various system designs by introducing (and calculating) type-specific analogues of~$\rnot$: specifically, we let~$\rnothr$ (respectively,~$\rnotlr$) denote the expected number of high-risk (respectively low-risk) customers that an infectious customer will infect during their sojourn in the service facility. It follows that~$\rnot~=~\rnothr~+\rnotlr$.

We find these new risk measures in an M/M/1 system under two separate interventions.  First, we consider an intervention where the service facility serves high-risk customers with preemptive priority over their low-risk counterparts (Section~\ref{sec:preemptive}).\footnote{We opt for preemptive (rather than non-preemptive) priority in the interest of simplicity.  Relatively straightforward (if cumbersome) modifications will allow for extending our analysis to non-preemptive priority scheduling.} Next, we compare the previous approach to an idealized benchmark where the service facility is able to completely isolate high- and low-risk customers from one another, by giving each their own designated window in which to visit the facility  (Section~\ref{sec:DesignatingShoppingTimes}).

\subsubsection{Preemptive Priority for High-risk Customers.}
\label{sec:preemptive}
We consider an M/M/1 system with arrival, service, and transmission rates $\lambda$, $\mu$, and $\alpha$, respectively, where each arrival is of type-$\hsf$ (high-risk) or of type-$\lsf$ (low-risk).  An arrival is of type $\tsf\in\{\hsf,\lsf\}$ with probability $\qt$ (independent of all arrival times, service requirements, and transmission thresholds)\footnote{For simplicity, we assume that high-risk customers are more likely to suffer adverse effects once infected, but they are otherwise identical to their low-priority counterparts, i.e., they are not more (or less) likely to contract an infection (over a fixed duration of time), nor are they more (or less) likely to be infectious when they arrive.}, so that~ $\qh+\ql=1$. For convenience, we let $\lambda_{\tsf}\equiv \qt\lambda$ be the arrival rate of type-$\tsf$ customers and $\rho_{\tsf}\equiv\lambda_{\tsf}/\mu$ be the contribution of such customers to the system load.  Note that it follows from the Poisson splitting property that type-$\tsf$ customers arrive according to a Poisson process with rate $\lambda_{\tsf}$~\citep[][Chapter~11.7]{harchol2013performance}.

We proceed to analyze this system under an intervention where type-$\hsf$ customers are given preemptive priority over their type-$\lsf$ counterparts (within each type, customers are scheduled~FCFS).  Specifically, we seek to find~$\rnot$ (defined as before), along with~$R_0^{\tsf}$---the expected number of type-$\tsf$ customers that an arbitrary IC (i.e., an infectious customer who is of type-$\hsf$ with probability~$\qh$ and of type-$\lsf$ otherwise) will infect, assuming that all other customers are susceptible.  

In order to compute the values of interest, we introduce some useful auxiliary notation: for any types $\tsfo,\tsft\in\{\hsf,\lsf\}$ let $\rtb$ (respectively, $\rta$) denote the expected number of type-$\tsft$ customers that a type-$\tsfo$ IC will infect during their sojourn in the facility from among those type-$\tsft$ customers who arrived to the system \emph{before} (respectively, \emph{after}) the IC arrived to the system (under the usual assumption that all customers other than the IC are susceptible).  Leveraging our new metrics (and notation), we have the following result:
\begin{Proposition}\label{prop:priority1}
In the M/M/1 system with preemptive priorities described above, we have
\begin{align}
    \rnot&=2\left(\qh\left(\rhhb+\rhlb\right)+\ql\left(\rlhb+\rllb\right)\right)\label{eq:1}\\
    \rnothr&=2\qh\left(\frac{\rho_H}{1-\rho_H}\right)\left(\frac{\eta}{\eta+1-\rho_H}\right)+\ql(\rlhb+\rlha)\\
    \rnotlr&=\rnot-\rnothr,
\end{align}
where expressions for $\rhhb$, $\rhlb$, $\rlhb$, $\rllb$, and $\rlha$ together with their derivations are given (in terms of the limiting probability distribution of the M/M/1 system with two priority classes) in Appendix~\ref{app:priority}.
\end{Proposition}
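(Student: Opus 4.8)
The plan is to decompose every quantity by the risk-type of the IC, the type of the infected customer, and whether that customer arrived before or after the IC, and then to reduce the resulting ``before/after'' pieces using two structural facts: a before-equals-after symmetry, and the observation that the high-risk customers form a self-contained M/M/1 subsystem. First I would condition on the IC's type and apply linearity of expectation exactly as in the proof of Proposition~\ref{prop:main}. Writing the expected number of type-$\tsft$ customers that a type-$\tsfo$ IC infects as $\rtb+\rta$ (its before- and after-components), this gives
\[\rnot=\qh\left(\rhhb+\rhha+\rhlb+\rhla\right)+\ql\left(\rlhb+\rlha+\rllb+\rlla\right),\]
and, by grouping the terms in which a high-risk customer is the one infected,
\[\rnothr=\qh\left(\rhhb+\rhha\right)+\ql\left(\rlhb+\rlha\right),\qquad \rnotlr=\rnot-\rnothr.\]

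Next I would establish the before-equals-after symmetry. The key point is that the pairing argument underlying Proposition~\ref{prop:main} (Appendix~\ref{app:double}) fixes the IC and matches the customers it infects among earlier arrivals with those it infects among later arrivals; I would verify that this argument conditions cleanly on the IC's (risk) type, so that it applies separately to a high-risk IC and to a low-risk IC, yielding $\rhhb+\rhlb=\rhha+\rhla$ and $\rlhb+\rllb=\rlha+\rlla$ (for each fixed IC type, the total number infected before equals the total number infected after). Substituting these into the expression for $\rnot$ collapses each after-pair onto its before-pair and produces the claimed Eq.~\eqref{eq:1}, namely $\rnot=2(\qh(\rhhb+\rhlb)+\ql(\rlhb+\rllb))$. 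Note that this symmetry is only claimed for the total over infected types, so it does \emph{not} equate $\rlhb$ with $\rlha$, which is why both are retained in $\rnothr$.

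For $\rnothr$ I would then evaluate the high-to-high term using the fact that, under preemptive priority with common service rate $\mu$, high-risk customers are never delayed or interrupted by low-risk customers; hence the high-risk customers in isolation behave exactly as a standard M/M/1/FCFS queue with load $\rho_H$. A high-risk IC therefore infects other high-risk customers exactly as the lone IC does in Proposition~\ref{prop:mm1} with $\rho$ replaced by $\rho_H$, so the (reversible) single-class symmetry gives $\rhhb=\rhha=(\tfrac{\rho_H}{1-\rho_H})(\tfrac{\eta}{\eta+1-\rho_H})$ and thus $\qh(\rhhb+\rhha)=2\qh(\tfrac{\rho_H}{1-\rho_H})(\tfrac{\eta}{\eta+1-\rho_H})$, matching the stated formula. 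With the high-to-high interaction thus pinned down and $\rnotlr=\rnot-\rnothr$ following by definition, all three displayed identities reduce to knowing the four cross-type quantities $\rhlb$, $\rlhb$, $\rllb$, and $\rlha$.

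Computing these four quantities is the part I would relegate to Appendix~\ref{app:priority}, and is where the real obstacle lies. They require the stationary distribution of the two-class preemptive-priority M/M/1 queue together with the Laplace transforms of the relevant overlap variables $\whl$, $\wlh$, $\wll$ (and, for the easy high-to-high piece, $\whh$), after which Corollary~\ref{cor:main} ($\mathbb{P}(W\ge\theta)=1-\widetilde{W}(\alpha)$) converts each transform into an infection probability. The hard part is that, unlike the clean $\erld(i,\mu)$ overlaps of the single-class M/M/1, a low-risk customer's sojourn can be repeatedly preempted by high-risk arrivals and a low-risk IC's own sojourn is itself inflated by high-risk arrivals during its service; consequently the cross-type overlaps become phase-type rather than Erlang, no symmetry relates $\rlhb$ to $\rlha$ (precisely why both must be computed directly), and the bookkeeping over the richer two-dimensional state space must be carried out with care.
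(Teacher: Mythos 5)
Your handling of the second and third equations is correct and matches the paper: you condition on the IC's type, observe that preemptive priority makes the high-risk customers a self-contained M/M/1 with load $\rho_\hsf$ so that Proposition~\ref{prop:mm1} yields $\qh(\rhhb+\rhha)=2\qh\bigl(\tfrac{\rho_\hsf}{1-\rho_\hsf}\bigr)\bigl(\tfrac{\eta}{\eta+1-\rho_\hsf}\bigr)$, and take $\rnotlr=\rnot-\rnothr$ by definition. The genuine gap is your justification of Eq.~\eqref{eq:1}: the per-IC-type before/after symmetry you assert, $\rlhb+\rllb=\rlha+\rlla$ (and its high-risk analogue), is \emph{false} in this system. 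The pairing argument of Appendix~\ref{app:double} works by swapping which member of an unordered customer pair carries the infectious label; once you condition on the IC's type, that swap is no longer available for mixed-type pairs, and preemptive priority makes the overlap of a mixed pair genuinely order-dependent. Concretely, a high-risk customer who arrives \emph{after} a low-risk IC overlaps with it for the high-risk customer's \emph{entire} sojourn (the low-risk IC cannot depart while any high-risk customer is present), and such arrivals also prolong the low-risk IC's sojourn, generating more such pairs; a high-risk customer already present when the low-risk IC arrives overlaps only for its \emph{remaining} sojourn. Hence $\rlha>\rlhb$ in general---which is exactly why Appendix~\ref{app:priority} must compute $\rlha$ via a separate busy-period analysis rather than reuse $\rlhb$. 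Your claims are in fact internally inconsistent: the same unordered-pair argument you invoke for $\rhhb=\rhha$ also gives $\rllb=\rlla$, and combining this with your per-type symmetry would force $\rlhb=\rlha$, contradicting your own (correct) insistence that these two quantities must be kept distinct.

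The repair is to apply the symmetry in the opposite order, as the paper does: symmetrize \emph{unconditionally} (``for the whole system'') first, and only then condition on the IC's type. With the IC's type averaged out, every unordered pair contributes equally to the before and after counts, because the infectious label is equally likely to sit on either member of the pair regardless of types, arrival order, and overlap; equivalently, one has the weighted identities $\qh\rhla=\ql\rlhb$ and $\ql\rlha=\qh\rhlb$, together with $\rhhb=\rhha$ and $\rllb=\rlla$. Summing these gives $\qh(\rhha+\rhla)+\ql(\rlha+\rlla)=\qh(\rhhb+\rhlb)+\ql(\rlhb+\rllb)$, i.e., the whole-system expected number of infections among earlier arrivals equals that among later arrivals, and doubling the before-side, decomposed by the IC's type, is precisely Eq.~\eqref{eq:1}.
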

\begin{proof}{Proof of Proposition~\ref{prop:priority1}}
The first equation follows from the symmetry argument that for the whole system, the expected number of susceptible customers (SCs) that the IC infects among those who arrive before and after the IC are the same. Conditioning on the type of IC, either high-risk or low-risk SCs who were present in the system when the IC arrives will possibly be infected which yields Eq.~\ref{eq:1}. Next, we obtain the second equation by conditioning on the type of the IC. If the IC is high-risk, then applying Proposition~\ref{prop:mm1} with the load $\rho_\hsf$ gives the first half of the equation. Otherwise, the IC will be low-risk and infect $\rlhb+\rlha$ high-risk SCs on average. Finally, the last equation follows from the fact that~$\rnot=\rnothr+\rnotlr$.\hfill$\square$
\end{proof}

\subsubsection{Designated Time Windows for High-Risk Customers.}
\label{sec:DesignatingShoppingTimes}
We now consider an alternative intervention (once again) designed to protect high-risk customers: type-$\hsf$ and type-$\lsf$ customers have their own pre-designated time windows for visiting (and being served in) the service facility; customers of each type are served in FCFS order during their type's window (and neither admitted nor served outside of that window).  We make two idealistic assumptions about this intervention: (i) we have complete compliance, that is customers only arrive during their own designated time windows and no potential arrivals are lost (i.e., all customers can adapt their schedules as necessary to visit the service facility at the same rate during the windows designated for their type); (ii) the time windows are sufficiently long so that we can treat each customer as seeing the system in steady-state (i.e., a steady-state specific to their type/window) upon their arrival; among other things, this implies that we ignore the effects of the boundaries between the time windows.  We further assume that the arrival processes remain Poisson (within each given window).  While these assumptions are not very realistic, they represent an ideal case where intervention works as intended.  In any case, this intervention represents a potentially useful baseline for comparing with the efficacy of the alternative intervention discussed in Section~\ref{sec:preemptive} (prioritization).

This intervention requires one to set a decision variable~$f$, which denotes the fraction of time that the system is open to (i.e., will receive arrivals of and serve) type-$\hsf$ customers; the remaining~$1-f$ fraction of time, the system will be open to type-$\lsf$ customers instead. Therefore, during each time window for a given customer type, that type's arrival rate will scale up, i.e., type-$\hsf$ and~$\lsf$ customers arrive at rates~$\lhr^\mathsf{win.}=(1/f)\lhr$ and $\llr^\mathsf{win.}=(1/{(1-f))}\llr$ during their respective windows; we assume that $f$ is chosen so as to guarantee system stability (i.e.,~$\lhr^\mathsf{win.}<f\mu$ and~$\llr^\mathsf{win.}<(1-f)\mu$).  Based on our idealistic assumptions, we can otherwise treat type-$\tsf$ customers as having their own type-specific M/M/1 system with load~$\rho_{\tsf}^{\mathsf{win.}}\equiv\lambda_{\tsf}^{\mathsf{win.}}/\mu$.  Once we observe that an IC can only infect customers of their own type in this setting, applying Proposition~\ref{prop:mm1} yields the following:
\[R_0^{\tsf}=2\qt\left(\dfrac{\rho_{\tsf}^{\mathsf{win.}}}{1-\rho_{\tsf}^{\mathsf{win.}}}\right)\left(\dfrac{\eta}{\eta+1-\rho_{\tsf}^{\mathsf{win.}}}\right), \quad \tsf\in\{\hsf,\lsf\}.\]

As an example of how our methodology can allow for a comparison of interventions aiming at protecting high-risk customers, Fig.~\ref{fig:RHRL} shows the trade-off between $\rnothr$ and $\rnotlr$ as we vary the $f$ parameter for the dedicated time window intervention; also shown in Fig.~\ref{fig:RHRL} are the $\rnothr$ and $\rnotlr$ values associated with the prioritization of high-risk customers.  Note that since the overall arrival rate $\lambda$ is constant across these interventions, the~$\rnothr$ and~$\rnotlr$ metrics are proportional to the rate at which high- and low-risk customers become infected, respectively (assuming a low background infection rate, $p$).  
\begin{figure}[]
    \centering
%
%
\resizebox{0.5\textwidth}{!}{
\definecolor{mycolor1}{rgb}{0.00000,0.44700,0.74100}%
\definecolor{mycolor2}{rgb}{0.85000,0.32500,0.09800}%
\definecolor{mycolor3}{rgb}{0.92900,0.69400,0.12500}%
\begin{tikzpicture}

\begin{axis}[%
width=4.521in,
height=3.566in,
at={(0.758in,0.481in)},
scale only axis,
xmin=0,
xmax=3,
xtick={0,0.5,1,1.5,2,2.5,3},
xticklabels={,0.5,1,1.5,2,2.5,3},
xlabel style={font=\color{white!15!black}},
xlabel={\Large $\rnothr$},
ymin=0,
ymax=3,
ylabel style={font=\color{white!15!black}},
ylabel={\Large $\rnotlr$},
axis background/.style={fill=white},
legend style={at={(0.9,0.9)},legend cell align=left, align=left, draw=white!15!black}
]
\addplot [color=mycolor1,line width=2.0pt]
  table[row sep=crcr]{%
2.57575757575758	0.561739130434783\\
2.41668235737149	0.575373374879123\\
2.27304714989444	0.589580873671782\\
2.14285714285714	0.604395604395604\\
2.02443550881534	0.619854151964243\\
1.91636363636364	0.635995955510617\\
1.81743421052632	0.652863583235932\\
1.72661405097684	0.670503038924091\\
1.64301443601664	0.688964104391199\\
1.56586722280153	0.708300722802817\\
1.4945054945055	0.728571428571428\\
1.42834777163136	0.749839830466709\\
1.36688505062537	0.772175155659895\\
1.30967010114378	0.79565286371569\\
1.25630857966835	0.820355341083569\\
1.20645161290323	0.84637268847795\\
1.1597895775111	0.873803615739098\\
1.1160468590298	0.902756461415677\\
1.07497741644083	0.933350357507659\\
1.03636101291393	0.965716563683474\\
1	0.999999999999997\\
0.965716563683479	1.03636101291393\\
0.933350357507663	1.07497741644083\\
0.902756461415682	1.11604685902979\\
0.873803615739102	1.15978957751109\\
0.846372688477954	1.20645161290322\\
0.820355341083573	1.25630857966834\\
0.795652863715693	1.30967010114377\\
0.772175155659898	1.36688505062537\\
0.749839830466712	1.42834777163135\\
0.728571428571431	1.49450549450549\\
0.70830072280282	1.56586722280152\\
0.688964104391202	1.64301443601663\\
0.670503038924094	1.72661405097683\\
0.652863583235935	1.8174342105263\\
0.635995955510619	1.91636363636362\\
0.619854151964246	2.02443550881533\\
0.604395604395606	2.14285714285713\\
0.589580873671785	2.27304714989442\\
0.575373374879125	2.41668235737147\\
0.561739130434785	2.57575757575755\\
};
\addlegendentry{Varying $f$}


\end{axis}
\filldraw [red] ((1.604in,1.93in) circle (2.5pt);
\filldraw [mycolor2] (2.241in,1.64in) rectangle ++(5pt,5pt);
\draw[->,dashed,thick] (1.45in,1.65in) -- (1.57in,1.875in);
\node[right, align=left]
at (1.13in,1.4in) {Priority \\[-12pt]Scheduling};
\draw[->,dashed,thick] (2.55in,1.85in) -- (2.34in,1.71in);
\node[right, align=left]
at (2.6in,1.9in) {FCFS \\[-12pt]Scheduling};
\end{tikzpicture}%
}
    \caption{The trade-off between $\rnothr$ and $\rnotlr$ of designated time windows as~$f$ varies ($f\in[15/34,19/34]$) in an~M/M/1 system with~$\lhr=\llr=1.5$,~$\mu=4$, and~$\alpha=0.5$.}
    \label{fig:RHRL}
\end{figure}
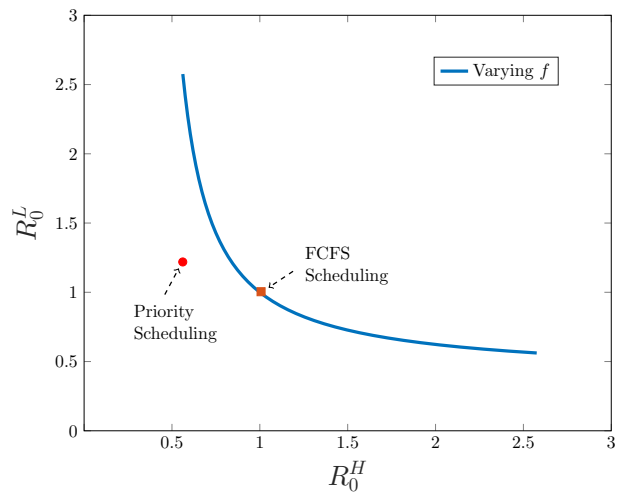

In this setting, the ordinary (no-intervention) FCFS scheduling policy obtains the same infections risks as setting $f=1/2$, which is the value of $f$ that minimizes $\rnot=\rnothr+\rnotlr$ under the designated time window intervention.  Since we have chosen a setting where high- and low-risk customers are equally numerous for illustrative purposes,\footnote{Other parameter choices, including making high-risk customers substantially rarer than their low-risk counterparts, yield qualitative similar plots.} giving one group a greater share of access to the service facility will lead to an \emph{overall} increase in infections.  That is, each high-risk customer that is saved from infection by increasing $f$ above $1/2$ comes at the cost of more than one low-risk customer being infected, in expectation.

Remarkably, under the other intervention (where we let both types of customers access the service facility at any time while prioritizing high-risk customers), we can earn a ``free lunch'': $(\rnothr,\rnotlr)=(0.561,1.221)$, and this point falls ``below'' the ``Pareto risk frontier'' associated with the designated time window intervention.  Moreover, $\rnot=\rnothr+\rnotlr=1.782$ under this intervention, as opposed to $\rnot=2$ under FCFS.  This suggests that prioritizing high-risk customers helps both those customers and the overall system.  That is, each high-risk customer that is saved from infection due to prioritization comes at an expected cost of \emph{less} than one additional infection among low-risk customers.  This result is driven by the exponential dose-response model as transmission thresholds are \emph{memoryless}: under this model, even when all customers are of the same risk-level, it is actually preferable to serve newly arriving customers ahead of those who have already been waiting in the system, as the latter may have already been infected.  As discussed earlier, there are other dose-response models, so we caution that in systems where one of these models turns out to be more appropriate than the exponential dose-response model, the benefits to prioritization over other interventions may be somewhat diminished.

\subsection{Intervention 3: Increasing the Service Rate}
\label{sec:serviceRate}
It is well documented that the duration of time that customers spend inside service facilitates increases the infection risk significantly \citep{rea2007duration}. It is advised that people who live in areas with high rates of COVID-19 (and its new strains) should limit their time inside stores to no more than 15 minutes~\citep{JacksonBreen2020ShoppingTime}. Though mandating customers to keep their time inside service facilitates short is difficult to implement, the recommendations and signals from healthcare organizations and service managers could impact customers' behavior. Empirical evidence (from previous epidemics as well as the COVID-19 pandemic) suggests that customers shop more quickly and are more reluctant to socialize inside grocery stores during an epidemic~\citep{dawson1990shopping,szymkowiak2020store, wang2020covid}. Managing the activities to speed up the service, like setting up self-checkout machines and appropriate staffing, can also shorten the sojourn time.

We capture these in the final intervention that we consider by increasing an M/M/1 system's service rate~$\mu$. While this may be accomplished by simply having staff work faster, in a retail store setting one can also limit the number of items a customer could buy, which reduces the amount of time each customer spends in the system, effectively increasing~$\mu$. The left plot in Fig.~\ref{fig:ISR} shows that $\lambda\rnot$ (which is proportional to the rate of infections) decreases rapidly as the service rate $\mu$ increases by a factor of $\mathcal{k}$. However, if customers are able to procure fewer goods with each visit to the facility, one would anticipate that they would shop more frequently and the arrival rate~$\lambda$ would also increase in response. Therefore, in the right plot of Fig.~\ref{fig:ISR}, we calculate the maximum factor $\mathcal{k}^{\max}$ that $\lambda$ could possibly increase by in order to maintain the same risk level (i.e. $\lambda\rnot$) of the system as the original scenario where the service rate $\mu=1$. Hence, the region below the blue curve in the plot shows the feasible scaling factors of $\lambda$ and $\mu$ such that the risk ($\lambda\rnot$) can be reduced. The effect of the decreased maximum $\lambda$ scale-up (i.e., the gap between the 45-degree line and the blue curve) will become more pronounced when the transmission rate $\alpha$ is larger.
\begin{figure}[]
\centering
%
%
\resizebox{0.45\textwidth}{!}{
\definecolor{mycolor1}{rgb}{0.00000,0.44700,0.74100}%
\begin{tikzpicture}

\begin{axis}[%
width=4.521in,
height=3.566in,
at={(0.758in,0.481in)},
scale only axis,
xmin=0.5,
xmax=3,
xtick={0.5,1,1.5,2,2.5,3},
xticklabels={,1,1.5,2,2.5,3},
xlabel style={font=\color{white!15!black}},
xlabel={\Huge $\mathcal{k}\mu$},
ymin=0,
ymax=40,
ylabel style={font=\color{white!15!black}},
ylabel={\Large $\lambda\rnot$},
axis background/.style={fill=white},
xmajorgrids,
ymajorgrids,
legend style={legend cell align=left, align=left, draw=white!15!black}
]
\addplot [color=mycolor1,line width=2.0pt]
  table[row sep=crcr]{%
1	36.1904761904762\\
1.01	29.8909657320872\\
1.02	25.3997378768021\\
1.03	22.0382882882883\\
1.04	19.4296951819076\\
1.05	17.3478260869565\\
1.06	15.6487956487956\\
1.07	14.2366946778711\\
1.08	13.0451366815003\\
1.09	12.0267131242741\\
1.1	11.1466666666667\\
1.11	10.378937007874\\
1.12	9.70360237118102\\
1.13	9.10517387616625\\
1.14	8.57142857142857\\
1.15	8.09259259259259\\
1.16	7.66075773375044\\
1.17	7.26945716154349\\
1.18	6.91335183472094\\
1.19	6.58799533799534\\
1.2	6.28965517241379\\
1.21	6.01517530088958\\
1.22	5.76186925180214\\
1.23	5.52743614001892\\
1.24	5.30989407257156\\
1.25	5.10752688172043\\
1.26	4.91884117526197\\
1.27	4.74253144654088\\
1.28	4.57745153397327\\
1.29	4.42259112233851\\
1.3	4.27705627705628\\
1.31	4.14005322687957\\
1.32	4.01087478010555\\
1.33	3.88888888888889\\
1.34	3.77352897584111\\
1.35	3.66428571428571\\
1.36	3.5607000137798\\
1.37	3.46235700984304\\
1.38	3.36888089425671\\
1.39	3.2799304520616\\
1.4	3.1951951951952\\
1.41	3.11439200186003\\
1.42	3.03726218619836\\
1.43	2.96356893542757\\
1.44	2.89309506185894\\
1.45	2.82564102564103\\
1.46	2.76102319100229\\
1.47	2.69907228449942\\
1.48	2.63963202853656\\
1.49	2.58255792738551\\
1.5	2.52771618625277\\
1.51	2.47498274672188\\
1.52	2.42424242424242\\
1.53	2.37538813531623\\
1.54	2.32832020370812\\
1.55	2.28294573643411\\
1.56	2.2391780614943\\
1.57	2.19693622035646\\
1.58	2.15614450908569\\
1.59	2.11673206278027\\
1.6	2.07863247863248\\
1.61	2.04178347350153\\
1.62	2.00612657237828\\
1.63	1.971606824548\\
1.64	1.93817254462897\\
1.65	1.90577507598784\\
1.66	1.87436857431509\\
1.67	1.84390980939098\\
1.68	1.81435798328881\\
1.69	1.78567456345234\\
1.7	1.7578231292517\\
1.71	1.73076923076923\\
1.72	1.70448025869713\\
1.73	1.67892532434365\\
1.74	1.65407514884675\\
1.75	1.62990196078431\\
1.76	1.60637940145074\\
1.77	1.58348243714097\\
1.78	1.56118727784702\\
1.79	1.53947130182872\\
1.8	1.51831298557159\\
1.81	1.49769183869001\\
1.82	1.47758834337478\\
1.83	1.4579838980208\\
1.84	1.43886076470346\\
1.85	1.42020202020202\\
1.86	1.40199151029476\\
1.87	1.38421380707496\\
1.88	1.36685416905828\\
1.89	1.34989850387189\\
1.9	1.33333333333333\\
1.91	1.31714576074332\\
1.92	1.30132344023116\\
1.93	1.28585454800477\\
1.94	1.27072775536939\\
1.95	1.25593220338983\\
1.96	1.24145747908124\\
1.97	1.22729359302249\\
1.98	1.21343095829436\\
1.99	1.19986037065245\\
2	1.18657298985168\\
2.01	1.17356032204536\\
2.02	1.16081420318785\\
2.03	1.14832678337502\\
2.04	1.13609051206144\\
2.05	1.12409812409812\\
2.06	1.11234262653821\\
2.07	1.10081728616211\\
2.08	1.08951561767706\\
2.09	1.07843137254902\\
2.1	1.06755852842809\\
2.11	1.05689127913108\\
2.12	1.04642402514743\\
2.13	1.03615136463721\\
2.14	1.02606808489161\\
2.15	1.01616915422886\\
2.16	1.00644971429973\\
2.17	0.996905072779148\\
2.18	0.987530696421333\\
2.19	0.97832220445782\\
2.2	0.96927536231884\\
2.21	0.960386075659851\\
2.22	0.951650384676127\\
2.23	0.943064458689459\\
2.24	0.93462459099194\\
2.25	0.926327193932828\\
2.26	0.918168794235252\\
2.27	0.91014602853043\\
2.28	0.902255639097745\\
2.29	0.894494469799761\\
2.3	0.886859462201928\\
2.31	0.879347651867286\\
2.32	0.871956164817123\\
2.33	0.86468221414899\\
2.34	0.857523096804058\\
2.35	0.850476190476191\\
2.36	0.843538950655605\\
2.37	0.836708907800364\\
2.38	0.829983664629334\\
2.39	0.823360893530606\\
2.4	0.816838334079713\\
2.41	0.810413790662277\\
2.42	0.804085130196037\\
2.43	0.797850279947466\\
2.44	0.791707225438462\\
2.45	0.785654008438818\\
2.46	0.779688725040452\\
2.47	0.773809523809524\\
2.48	0.768014604012844\\
2.49	0.762302213915117\\
2.5	0.756670649143767\\
2.51	0.751118251118251\\
2.52	0.745643405540934\\
2.53	0.740244540946749\\
2.54	0.734920127308998\\
2.55	0.729668674698795\\
2.56	0.72448873199577\\
2.57	0.71937888564778\\
2.58	0.714337758477478\\
2.59	0.709364008533702\\
2.6	0.70445632798574\\
2.61	0.699613442058633\\
2.62	0.694834108007761\\
2.63	0.690117114131035\\
2.64	0.685461278817115\\
2.65	0.680865449628127\\
2.66	0.676328502415459\\
2.67	0.671849340467235\\
2.68	0.667426893686183\\
2.69	0.663060117796632\\
2.7	0.658747993579454\\
2.71	0.654489526133821\\
2.72	0.650283744164685\\
2.73	0.64612969929495\\
2.74	0.642026465401359\\
2.75	0.637973137973138\\
2.76	0.633968833492511\\
2.77	0.630012688836218\\
2.78	0.626103860697225\\
2.79	0.622241525025824\\
2.8	0.618424876489393\\
2.81	0.614653127950082\\
2.82	0.61092550995975\\
2.83	0.607241270271491\\
2.84	0.603599673367115\\
2.85	0.6\\
2.86	0.596441546752719\\
2.87	0.592923625608907\\
2.88	0.589445563538828\\
2.89	0.586006702098141\\
2.9	0.582606397039387\\
2.91	0.579244017935716\\
2.92	0.575918947816428\\
2.93	0.572630582813882\\
2.94	0.569378331821379\\
2.95	0.566161616161616\\
2.96	0.562979869265343\\
2.97	0.559832536359848\\
2.98	0.556719074166937\\
2.99	0.553638950610065\\
3	0.550591644530307\\
};

\end{axis}
\end{tikzpicture}%
}
\hspace{30pt}
\centering
\input{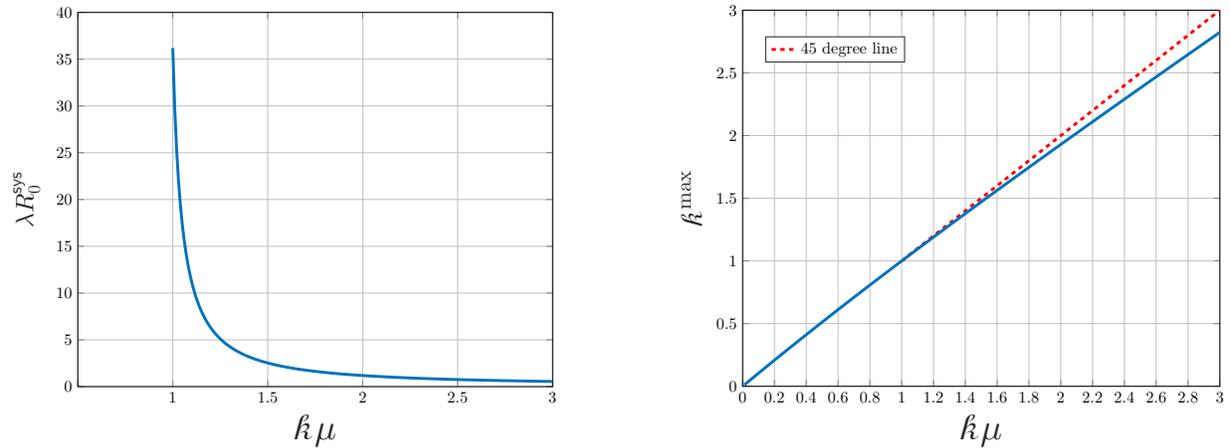}
\caption{M/M/1/FCFS system with service rate increase when $\lambda=0.95$, $\mu=1$, and~$\theta\sim\expd(1)$}
\label{fig:ISR}
\end{figure}

We note that plots such as the one depicted on the right in Fig.~\ref{fig:ISR} also facilitate the study of the impact of a ``reverse'' of the phenomena discussed above.  Customers may be discouraged from shopping regularly, therefore reducing the arrival rate~$\lambda$, which causes them to actually spend longer periods of time in the service facility, thereby also reducing the service rate~$\mu$ (i.e., they need to procure more goods in each visit).  Based on the scaling of each factor, one can examine if this collective behavioral shift increases or decreases in the risk of infection at the service facility.

\section{Model Modifications and Concluding Remarks}\label{sec:extensions}
Our modelling framework for measuring the risk of infection in small-scale settings in the presence of stochastic arrivals and departures can be modified in various ways to capture some more nuanced features of disease transmission. In this section, we discuss how our novel~$\rnot$ measure can be modified to capture these features, such as heterogeneity in individuals infectiousness and susceptibility and the impact of spatial dynamics on transmissions.  We then proceed to present our concluding remarks.

\subsection{Heterogeneity in Infectiousness and Susceptibility}\label{sec:extension-het}

In reality individuals exhibit different levels of infectiousness and susceptibility~\citep{gomes2020individual}.  While individual heterogeneity may be due to differences in human bodies, it can also be a result of behavioral interventions.  For example, research has provided ample evidence that when an infectious individual wears a mask properly, the risk that they transmit a SARS-CoV-2 infection to others (over a short time duration) is significantly reduced \citep{lai2012effectiveness,bundgaard2020effectiveness,ngonghala2020mathematical}; on the other end, evidence also suggests that susceptible individuals are less vulnerable to becoming infected when they wear masks properly \citep{wu2004risk}. Other behaviors can also affect transmission rates, e.g., infectious individuals expel more aerosols the louder they talk~\citep{tang2013airflow, Buonanno2020}.  Perhaps the starkest form of heterogeneity comes from the varying levels of immunity present in the population: recently infected and vaccinated individuals are far less likely to become infected than individuals who have no immunity~\citep{Polack2020,Harvey2021}.

While modeling the transmission threshold~$\theta$ following an exponential distribution for each pair of infectious-susceptible customer may be seen as accounting for this variability, one could argue that the exponential distribution captures \emph{idiosyncratic}, rather than \emph{systematic}, variation introduced by heterogeneity.  We would naturally anticipate markedly different transmission dynamics in environments where roughly half of the susceptible individuals are vaccinated and half are not, than one where nearly all susceptible individuals exhibit the same level of immunity (if any). Modifying the transmission rate~$\alpha$ to account for the level of vaccination (or mask usage) in the population may fail to capture important nuanced transmission dynamics.  To this end, we can allow for the transmission threshold~$\theta$ for each infectious-susceptible pair to be drawn from a \emph{hyperexponential} distribution (i.e., to be drawn from one of several exponential distributions, each with its own probability).  Such hyperexponential dose response models have been proposed in other contexts (e.g., in models of bio-catalytic reactions) in the literature~\citep{kuhl2006receptor, kuhl2007nonexponential}. 

Consider an example where customers wear masks with probability $q$ and the likelihood of wearing a mask is the same whether one is infected or not. In such a case, we may allow for four transmission rates~$\alpha_{\mathrm{MM}}, \alpha_{\mathrm{MU}}, \alpha_{\mathrm{UM}}$, and~$\alpha_{\mathrm{UU}}$ respectively denoting transmission rates from \emph{masked} infectious to \emph{masked} susceptible customers, \emph{masked} infectious to \emph{unmasked} susceptible customers, and so on.  Then, for any given infectious-susceptible customer pair, the corresponding transmission threshold follows a hyperexponential distribution with rates~$\alpha_{\mathrm{MM}}, \alpha_{\mathrm{MU}}, \alpha_{\mathrm{UM}}$, and~$\alpha_{\mathrm{UU}}$ with respective probability~$q^2, q(1-q), q(1-q)$, and~$(1-q)^2$. Using hyperexponential transmission thresholds does not substantially complicate the computation of~$\rnot$. We provide the derivations of $\rnot$ in this case in Proposition~\ref{prop:hyper} in Appendix~\ref{hyper}.

This approach could also be taken to account for multiple viral variants, each with their own transmission rate. However, a better approach might be to model each variant separately (and in parallel) with their own $\rnot$ value, as we may also be interested in tracking the spread of (and risk of becoming infected with) each such variant at a service facility separately.

\subsection{Multiple Simultaneous Infectors}\label{sec:extension-groups}

Our transmission model and our~$\rnot$ metric (and metrics derived from it such as~$p \lambda \rnot$) are suitable for settings where infection rate is very low, i.e., specifically when at any given time there is at most one infectious customer in the service facility.  We can adapt the transmission model to accommodate the possibility of multiple simultaneous infectious customers (ICs) by assuming that the SC becomes sick if their \emph{cumulative exposure} to infectious \emph{customers} exceeds a given (e.g., exponentially distributed) threshold.  Here, by cumulative exposure to ICs we mean the sum of the SC's sojourn time overlap with each potential IC. 

For example, spending $\tau$ units of time in the system while exactly $m$ ICs were also present in the system contributes $m\tau$ to this cumulative exposure.  Formally, an SC who arrived to the system at time $a$ and departed at time $d$ becomes infected with probability~$1-\exp\left(-\int_a^d \iota(t)\,dt\right),$ where $\iota(t)$ denotes the number of ICs present in the system at time $t$.  We can justify this because an SC becomes infected as soon as any IC transmits the infection to that SC; so, if the time it takes \emph{any given} IC to infect the SC is exponentially distributed (and independent of all other such infection times), then the time it takes for \emph{any one} IC to infect the SC is exponentially distributed with a rate equal to the sum of these rates.  This yields the model described above.

This modified model allows one to compute the average rate of infections without having to rely on the $p\lambda\rnot$ (or $\lambda p(1-p)\rnot$) assumption which suffers when $p\gg0$.  That said, this model significantly complicates the analysis of the system, but we conjecture that tractable analytical results are still obtainable, suggesting a potential avenue for future work.

One limitation of the single-infector assumption, however, may have a simple resolution.  In reality, many people enter service facilities in small groups formed of members of the same household. If one member of such a group is infectious, there is a substantial likelihood that the group consists of more than one infectious customer.  However, such groups often operate as a single customer, so we can treat the fact that some groups are more likely to transmit infections (due to having multiple infectious members) as simply an instance of heterogeneity, which we can treat with. 

An alternative and more sophisticated (yet technically challenging) method of allowing for multiple simultaneous ICs in our transmission model is to separately track the concentration of viral particles in the environment (e.g., as a fluid quantity).  In such a setting each IC would contribute to such a concentration at a constant rate (by expelling aerosols while breathing) throughout their sojourn in the service facility; this concentration would also be subject to exponential decay.  An SC would then become infected if the cumulative concentration level that are exposed to throughout their sojourn exceeds a given (e.g., exponentially distributed) threshold.  While such an approach is far beyond the scope of this paper, it is particularly attractive as it allows for the potential for an IC to \emph{indirectly} infect an SC when the latter arrives to the service facility shortly after the former departs (i.e., this approach permits infections even when there is no sojourn time overlap between the IC and SC).

\subsection{Impact of Spatial Positioning on Transmission}\label{sec:extension-distance}
While the spatial dynamics of customers and the direction of airflow, etc. are likely to play an important role in viral transmission, in many circumstances incorporating these dynamics is likely to require analytic techniques that are beyond the scope of this paper (and would be likely to hinder tractability) \citep[for example, see the processes in Section~4 of][]{zhang2015} and necessitate factoring in idiosyncrasies of a particular system \citep[see][for examples]{Wallace2002}. 

Nevertheless, physical distancing between customers is one of the most commonly employed interventions, and we present a generalization of our transmission model that is rich enough to capture a variety of setting-specific idiosyncrasies. Specifically, we modify our transmission model to incorporate distances between customers by allowing transmission rates to be distance-dependent.  Most generally, we consider \emph{position-dependent} transmission rates as follows: we can think of each customer as occupying a \emph{position} in space, with customers occasionally moving from one position to another.  For example, in the context of~M/M/1/FCFS or~M/M/1/$k$/FCFS systems, these positions could simply be the queue positions~$1,2,3,\ldots$, i.e., a customer is in position~$m$ if it will complete service after the~$m$-th departure.  In this setting, a newly arriving customer occupies the lowest-numbered unoccupied position.  Whenever the system is nonempty, the customer at position~$1$ is in service; when they depart for each $m\in\{1,2,3,\ldots\}$ the customer in position $m$ (if any) advances (i.e., moves to) position $m-1$.  This position model can easily be extended to a variety of other queueing systems (M/M/$c$, M/M/$c$/$k$) and scheduling disciplines.

Once we have defined positions, let~$\alpha_{m,j}$ denote the rate at which an IC at position~$m$ transmits an infection to a SC at position~$j$.  The transmission rates~$\alpha_{m,j}$ may depend on a variety of factors (the physical distance between those positions, the environmental conditions near and between those positions including the direction and speed of airflow, etc.). 

We briefly discuss the generality afforded by position-dependent transmission rates.  Naturally, we may assume that $\alpha_{m,j}$ depends on the distance between positions $m$ and $j$ in physical space, e.g., with the ``gravity model'' \citep{Jia2020, Balcan2009}: $\alpha_{m,j}=1/{||x_m-x_j||}^2$, where~$x_m$ and~$x_j$ denote the locations of positions $m$ and $j$ in the Euclidean plane, respectively.  Note that in e.g., a ``snake queue,'' the distance between $x_m$ and $x_j$ may be non-monotonic in $|m-j|$; for example, in Fig.~\ref{fig:snake}, position~1 is~6 feet away from position~8 but~18 feet away from position~4.  However, factors other than distance may impact transmission rates, with Fig.~\ref{fig:snake} providing such an example (inspired by a epidemiological case study reported in \cite{Lu2020}): due to airflow, we would expect~$\alpha_{12,6}$,~$\alpha_{12, 2}$, and $\alpha_{6,2}$ to be much larger than all other position-dependent transmission rates.  Note that due to the direction of airflow, we would also expect (significantly) asymmetric transmission rates (e.g., $\alpha_{12,2}\gg\alpha_{2,12}$).

A particularly applicable (and fairly general) special case of position-dependent transmission rates is as follows: we partition the set of possible positions into a set of \emph{zones}, $Z_1,Z_2,\ldots$.  Let $Z(m)$ denote the zone to which position $m$ belongs. Then, each zone $Z$ could have its own transmission rate $\alpha_Z$ such that $\alpha_{m,j}=\alpha_{Z(m)}$ whenever $Z(m)=Z(j)$ (i.e., positions $m$ and $j$ are in the same zone), while~$\alpha_{m,j}=0$, otherwise.  For example, we could have a retail store where some people are lined up outside the store and others are inside the store shopping or checking out.  Hence, we have two zones, and customers can only infect others in the same zone\footnote{We could of course add additional zones for e.g., the produce department, the deli, the bakery, etc.}.  Moreover, we would expect a much lower transmission rate outdoors than indoors.  Of course, we can relax the assumption that~$\alpha_{m,j}=0$ when $Z(m)\neq Z(j)$, instead opting for ``cross-zone transmission rates'' $\alpha_{m,j}=\alpha_{Z(m),Z(j)}$, where presumably $\alpha_{Z,Z'}<\min(\alpha_Z,\alpha_{Z'})$ for all pairs of zones $Z$ and $Z'$.
\begin{figure}[]
\centering
\includegraphics[width=0.50\textwidth]{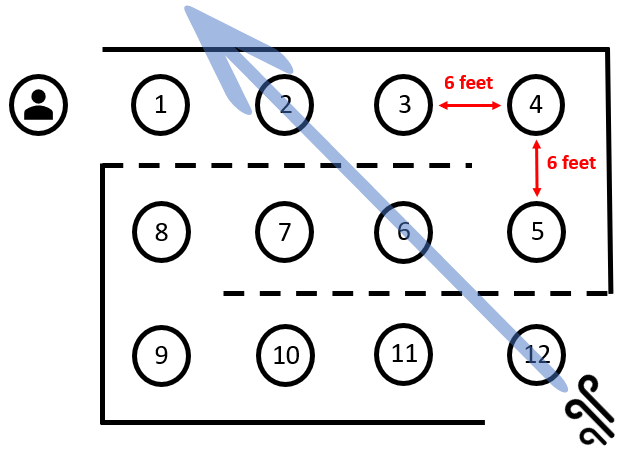}
\caption{A ``snake queue'' where 12 queue positions are organized in a $3\times4$ rectangular array such that any two orthogonally adjacent positions are 6 feet apart.  Moreover,  a strong current of air is flowing through positions 12, 6, and 2 (in that order).}
\label{fig:snake}
\end{figure}

For further discussion of such transmission models---including the derivation of~$\rnot$ for an M/M/1 system under a general position-dependent transmission model (Proposition~\ref{prop:distance}) and a closed-form result for $\rnot$ in the special case where transmissions can only occur between customers that are within a fixed number of positions (e.g., distance) from one another (Proposition~\ref{prop:queue_pos})---see Appendix~\ref{app:distance}.

\subsection{Concluding Remarks}
In this paper, we have introduced a modeling framework for studying disease transmission in service facilities where customer arrivals and departures exhibit idiosyncratic stochastic variability. In addition to proposing a transmission model (embedded in a queueing-theoretic context) and measures of disease transmission (e.g., $\rnot$, $\lambda p\rnot$, $\rnothr$, $\rnotlr$, etc.) in the context of service facilities, we have proposed the novel queueing-theoretic notion of sojourn time overlaps as a methodological tool for deriving our risk measures.  

We hope that our modeling framework will provide researchers with ample new research opportunities and some ideas that can help with studying new classes of problems at the intersection of queueing theory and epidemiology.  Specifically, on the theoretical side, we posit sojourn time overlaps (as introduced in this paper) merit further study as mathematical objects in their own right; their study can potentially lead to a greater understanding of the dynamics of a variety of fundamental queueing models. Interest in this notion has the potential to generate a rich class of new open problems in queueing.  Our preliminary investigations toward deriving the $\rnot$ metric for more sophisticated (and realistic) queueing networks suggest that the analysis of sojourn time overlaps in these systems is often nontrivial, yet still tractable.

Meanwhile, given that we have been able to derive a variety of formulas for our transmission risk measure $\rnot$ in closed form, we anticipate that our modeling framework can be incorporated in stylized economic models that allow for the extraction of additional managerial insights.  Finally, on the practical side, we are optimistic that the methods presented in this paper can supplement existing tools (e.g., simulation, spatio-temporal models informed by mobility networks, etc.) in assisting managers and policymakers alike in making informed decisions when designing and fine-tuning appropriate interventions for mitigating disease transmission in service facilities.  

\bibliographystyle{arXiv2021} 
\bibliography{reference.bib}

\newpage

\APPENDIX
\quad
\section{Details omitted from the proof of Proposition~\ref{prop:main}}\label{app:double}
In this section, we provide more formal details for completing the proof of Proposition~\ref{prop:main}.

For any arbitrary value of $t>0$, consider the number of other customers---who must all be susceptible by assumption---$\mathbf{Ov}(t)$ that the IC's sojourn overlaps with for a nonzero duration of time that is less than or equal to $t$.  If we further let $\mathbf{OvB}(t)$ and $\mathbf{OvA}(t)$ be the set of such customers that arrived \emph{before} and \emph{after} the IC, respectively, then $\mathbf{Ov}(t)=\mathbf{OvB}(t)+\mathbf{OvA}(t)$ (since we are assuming Poisson arrivals, no other customer's arrival coincided with the instant of the IC's arrival with probability 1, so each other customer arrived either before or after the IC).  Moreover, since the system is ergodic (and assumed to be in steady state) and since infectious customers are \emph{functionally indistinguishable} from their susceptible counterparts, $\mathbf{OvB}(t)$ and $\mathbf{OvA}(t)$ are distributed in the same way (although not necessarily independent); this is because given any pair of customers, where one is susceptible and the other is infected, it is equally likely that that infected one arrived before or after the other, and the same is true when conditioning on the pair's sojourn time overlap.

With this new notation and the fact that $\mathbf{OvA}(t)$ and $\mathbf{OvB}(t)$ are distributed in the same way, we can rewrite $\rnot$ as follows:
\[\rnot=\mathbb E\left[\int_0^\infty\mathbb P(t\ge\theta)\,d\mathbf{Ov}(t)\right]=\mathbb E\left[\int_0^\infty\mathbb P(t\ge\theta)\,d(\mathbf{OvA}(t)+\mathbf{OvB}(t))\right]=2\mathbb E\left[\int_0^\infty \mathbb P(t\ge\theta)\,d\mathbf{OvB}(t)\right],\] where the expectation operators are needed as for any given value of $t$, the $\mathbf{Ov}(t)$, $\mathbf{OvA}(t)$, and  $\mathbf{OvB}(t)$ are random variables.  Now observe that the last equality describes twice the expected number of customers that the IC will infect among those already present in the system, which is precisely twice the quantity given in Display~\eqref{eq:half}, completing the proof.
\section{The values of $W_i^{(s)}$ in the M/M/$c$ Model}\label{app:ws}
In this appendix, we prove that the sojourn time overlap between the IC and customer $i$ is distributed according to

\[W_i^{(s)}\sim\begin{cases}\expd(2\mu)&1\leq s<c\\\min(\erld(s-c+1,(c-1)\mu)+\expd(\mu),\expd(\mu))&i\leq c\leq s\\\erld(i-c,c\mu)+W_c^{(s-(i-c))}&c<i\leq s\end{cases},\] where all component distributions above are independent of one other, and (as claimed in Proposition~\ref{prop:mmc}) takes the following Laplace Transform:

\[\widetilde{W}_i^{(s)}(\alpha)=\begin{cases}2/(\eta+2)& 1\leq s<c\\\left(\eta \left(\dfrac{c-1}{\eta+c}\right)^{s-c+1}+\eta+2\right)\left(\dfrac{1}{\eta^2+3\eta+2}\right)& i\leq c\leq s\\[10pt]\left(\dfrac{c}{\eta+c}\right)^{i-c} \left(\eta \left(\dfrac{c-1}{\eta+c}\right)^{s-i+1}+\eta+2\right)\left(\dfrac{1}{\eta^2+3\eta+2}\right)&c<i\leq s\end{cases},\] for all~$s\in\mathcal S$ and~$i\in\{1,2,\ldots,s\}$,

We addresses case 1 (i.e., when~$1\le s<c$), case 2 (i.e., when $i\le c\le s$), and case 3 (i.e., when $c<i\le s$), separately and sequentially.

\textbf{Under case 1} (i.e., when~$1\le s<c$), the IC's service starts upon arrival, seeing some customer~$i$ who is already in service at that time. Therefore, the IC's sojourn overlaps with that of customer~$i$ for an amount of time that is whichever is less of the service time of the IC (call this~$X$) or the \emph{remaining} service time of customer~$i$ (call this~$X_i$), i.e.,~$W_i^{(s)}=\min(X,X_i)$. Clearly,~$X\sim\expd(\mu)$, but we must also have~$X_i\sim\expd(\mu)$, due to the memoryless property of the exponential distribution.  Moreover, since~$X$ and~$X_i$ are independent, we must have~$W_i^{(s)}=\min(X,X_i)\sim\expd(2\mu)$ and~$\widetilde{W}_i^{(s)}(\alpha)=2/(\eta+2)$ as claimed.

\textbf{Under case 2} (i.e., when~$i\le c\le s$), the IC arrives at position~$s-c+1$ of the \emph{queue} (i.e., so that the IC will enter service at one of the~$c$ servers after the system experiences~$s-c+1$ service departures), while customer~$i$ is already in service.  In this case, the IC's sojourn overlaps with that of customer~$i$ for an amount of time equal to whichever is less of the \emph{sojourn} time of the IC or the \emph{remaining} service time of customer~$i$ (call this~$X_i$;~$X_i\sim\expd(\mu)$ as in the previous case).  The sojourn time of the IC is~$Y+X$ (so that~$W_i^{(s)}=\min\left(Y+X,X_i\right)$), where~$Y$ and~$X$ are the durations of time the IC spends in the \emph{queue} and \emph{in service}, respectively.  The random variable~$Y$ (the distribution of which depends on~$s$) corresponds to the time it takes for~$s-c+1$ successive departures (from any of the~$c$ servers), while~$X\sim\expd(\mu)$ as in the previous case.  Note that the IC is in the queue during the entire time it takes for these~$s-c+1$ successive departures (that makes up~$Y$) to take place, and hence, all~$c$ servers are busy during this time, and so these~$s-c+1$ successive departures will each take up an amount of time that is drawn from the~$\expd(c\mu)$ distribution, and since all such ``inter-departure'' times are independent (as service times are independent and exponentially distributed), we have~$Y\sim\erld(s-c+1,c\mu)$.  Note, however, that~$Y$ and~$X_i$ are \emph{not} independent (while~$X$ is independent of both~$Y$ and~$X_i$), because a departure from the system may actually be due to customer~$i$ being served.  We can alternatively view~$W_i^{(s)}=\min\left(Y'+X,X_i\right)$, where~$Y'\sim\erld(s-c+1,(c-1)\mu)$ is the time it takes for~$s-c+1$ successive departures to occur assuming only~$c-1$ servers are running (i.e., ignoring the server on which job~$i$ is running); in this case~$Y'$ and~$X_i$ are independent, and we have~$W_i^{(s)}\sim\min(\erld(s-c+1,(c-1)\mu)+\expd(\mu),\expd(\mu))$, as claimed.

We obtain~$\widetilde W_i^{(s)}$ using first-step analysis by observing that~$W_i^{(s)}$ is in fact distributed according to a Coxian phase-type distribution \citep[See][Chapter 21.1, for details]{harchol2013performance} with~$s-c+2$ phases, all but the last of which have a rate of~$c\mu$ as they correspond to a service at any of the~$c$ servers; at the conclusion of each of these phases the entire process terminates with probability~$1/c$ (corresponding to customer~$i$'s service, as this would conclude the sojourn overlap) or continue to the next phase (correspond to a departure due to any customer in service other than customer~$i$, as this would advance the IC one position in the queue, or bring them into service if they previously at the head of the queue). The last phase has a rate of~$2\mu$ as it corresponds only to the service of either the IC or server~$i$ (either of which would conclude the sojourn overlap).  Denote by~$U_m$ the remaining duration of such a distribution given that we currently have~$m$ phases left to go after the current phase (assuming the process does not terminate early), so that~$U_m=X_m+(c-1)U_{m-1}/c$, for all~$m\ge1$ where~$U_m\sim\expd(c\mu)$, while~$U_0\sim\expd(2\mu)$.  Clearly,~$W_i^{(s)}=U_{s-c+1}$.  Using standard manipulations of Laplace Transforms \citep[See][Chapter 25, for details]{harchol2013performance} and recalling that~$\eta\equiv\alpha/\mu$, we have
\[\widetilde U_m(\alpha)=\left(\frac{1}{\eta+c}\right)\left(1+(c-1)\widetilde U_{m-1}(\alpha)\right)\] for all~$m\ge1$, and~$\widetilde U_0(\alpha)=2\mu/(\alpha+2\mu)=2/(\eta+2)$.  Solving this linear recursion (and recalling that~$\eta\equiv\alpha/\mu$) yields \[\widetilde U_m(\alpha)=\left(\eta\left(\frac{c-1}{\eta+c}\right)^m+\eta+1\right)\left(\frac1{\eta^2+3\eta+2}\right),\] which coincides with the claimed value of~$\tilde W_i^{(s)}(\alpha)$ for case 2, when we set~$m=s-c+1$, thus verifying the claim.

\textbf{Under case 3} (i.e., when~$c<i\le s$), the IC arrives at position~$s-c+1$ of the queue, and finds customer~$i$ in position~$i-c$ of the queue.  We break up the sojourn time overlap between the IC and customer~$i$ into two parts: the duration of time their sojourns overlap while both the IC and customer~$i$ are present in the \emph{queue} (call this~$Q$, which depends on~$i$), and the remaining portion of the sojourn time overlap, which corresponds to the duration of time their sojourns overlaps while customer~$i$ is in service (call this~$V$, and note that the IC may---but need not necessarily be---in service during some of this time).  The first of these durations corresponds to the time it takes for the system to experience~$i-c$ consecutive departures, so~$Q\sim \erld(i-c,c\mu)$.  Meanwhile, when customer~$i$ enters service, the IC will be in position~$s-c+1-(i-c)=s-i+1$ of the queue, and hence, the remaining sojourn time overlap,~$V$, will be the same as the total sojourn time overlap between a customer who had arrived to position~$s-i+1$ of the queue (i.e., who had arrived to a system with~$s-(i-c)$ \emph{other} customers already present in the system), while customer~$i$ was in service (e.g., in position~$c$).  That is,~$V\sim W_c^{(s-(i-c))}$; also note that~$Q$ and~$V$ are independent.  It follows that in this case~$W_i^{(s)}=Q+V\sim\erld(i-c,c\mu)+W_c^{(s-(i-c))}$ as claimed.  Moreover, it follows that ~$\widetilde W_i^{(s)}(\alpha)=\widetilde Q(\alpha)\widetilde V(\alpha)=(c/(\eta+c))^{i-c}\widetilde W_{c}^{(s-(i-c))}(\alpha)$.  Substituting in the expression for~$\widetilde W_{c}^{(s-(i-c))}$ from case 2 shows that~$\widetilde W_i^{(s)}(\alpha)$ is also as claimed in case 3.
\section{Expressions for the values appearing in Proposition~\ref{prop:priority1}}\label{app:priority}

In the setting considered in Proposition~\ref{prop:priority1}, let $\pi(h,\ell)$ be the limiting probability distribution of the number of high- and low-risk customers in the system under steady state (see \cite{marks1973priority} for the exact solutions), the expressions for the five terms are given in the following Proposition:
\begin{Proposition}\label{prop:apppriority}
In the M/M/1 system with preemptive priorities described above, we have
\begin{align}
  \rhhb&=\left(\frac{\rho_\hsf}{1-\rho_\hsf}\right)\left(\frac{\eta}{\eta+1-\rho_\hsf}\right)\label{eqapp1}\\
  \rhlb&=\sum_{h=0}^{\infty}{\sum_{\ell=1}^{\infty}{\psys(h,\ell)\sum_{i=1}^{\ell}{\left(1-{\left(\dfrac{1}{1+\eta}\right)}^{h+1}\right)}}}\label{eqapp2}\\
  \rlhb&=\left(\frac{\rho_\hsf}{1-\rho_\hsf}\right)\left(\frac{\eta}{\eta+1-\rho_\hsf}\right)\label{eqapp3}\\
  \rllb&=\sum_{h=0}^{\infty}{\sum_{\ell=1}^{\infty}{\psys(h,\ell)\sum_{i=1}^{\ell}{\mathbb{P}(\wll\ge\theta)}}}\label{eqapp4}\\
  \rlla&=\sum_{(h,\ell)\in\mathcal{S}}{\psys(h,\ell)A}\label{eqapp5}
\end{align}
where \[A=\dfrac{1-\rho_\hsf-(1+\eta)^{h}\left(1-\rho_\hsf-\eta\left(h+\eta+h\eta+\ell\eta-h\rho_\hsf\right)\right)}{\eta(1-\rho)(1+\eta-\rho)(1+\eta)^h}-(1+\ell)\left(1-\dfrac{1}{1+\eta}\right)\] and $\wll$ denotes the length of a busy period started by $(h+i)/\mu$ amount of work in an M/M/1 system with arrival rate $\lhr$ and service rate $\mu$.
\end{Proposition}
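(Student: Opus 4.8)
The plan is to lean on the defining feature of preemptive priority: type-$\hsf$ customers are served without any regard to type-$\lsf$ customers, so the high-risk population evolves as an \emph{autonomous} M/M/1 queue with load $\rho_{\hsf}$, oblivious to the low-risk traffic. Each of the five quantities is an expected count of infected susceptibles of a fixed type, so by linearity of expectation I would write it as $\sum_{(h,\ell)}\pi(h,\ell)\sum_i \mathbb P(W\ge\theta)$ and exploit the identity $\mathbb P(W\ge\theta)=1-\widetilde W(\alpha)$ for $\theta\sim\expd(\alpha)$ (as in Corollary~\ref{cor:main}). The crux is thus to identify, for each (IC-type, infectee-type, before/after) combination, the distribution of the sojourn time overlap $W$. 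The key simplification is that the within-class FCFS rule together with preemption pins down which member of a pair departs first, collapsing $W$ to a single remaining-sojourn (or full-sojourn) random variable.

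First I would dispatch the three before-terms whose overlap is the infectee's \emph{remaining} sojourn. For $\rhhb$ both parties live in the autonomous high-risk queue and the pre-existing high-risk customers depart first (FCFS), so $W$ is exactly the remaining-sojourn overlap of Proposition~\ref{prop:mm1}; the before-contribution is half of that proposition's $\rnot$ evaluated at load $\rho_{\hsf}$, giving Eq.~\eqref{eqapp1}. For $\rlhb$, preemption forces every high-risk customer present at the IC's arrival to clear before a low-risk IC can depart, so $W$ is again that high-risk customer's remaining sojourn; since both a high- and a low-risk IC see the high-risk subsystem in steady state on arrival (PASTA and functional indistinguishability), these overlaps are identically distributed and $\rlhb=\rhhb$, which is Eq.~\eqref{eqapp3}. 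For $\rllb$, a pre-existing low-risk customer in low-risk position $i$ departs before the low-risk IC (FCFS within type $\lsf$), and its remaining sojourn is the time to clear $h+i$ units of work while high-risk arrivals (rate $\lhr$) preempt---precisely the busy period $\wll$ of an M/M/1$(\lhr,\mu)$---establishing Eq.~\eqref{eqapp4}.

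Next I would treat $\rhlb$, the lone before-term whose overlap is the \emph{IC's} sojourn. A high-risk IC clears before any low-risk customer present can resume service (preemption), so each of the $\ell$ waiting low-risk customers overlaps the IC for its entire high-risk sojourn, distributed as $\erld(h+1,\mu)$; hence each is infected with probability $1-(1/(1+\eta))^{h+1}$, and summing over the $\ell$ of them and over $\pi(h,\ell)$ yields Eq.~\eqref{eqapp2}.

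The hard part will be the remaining after-term in Eq.~\eqref{eqapp5}, which cannot be reduced to a single pre-existing overlap via PASTA. Here the low-risk IC occupies the system for its entire (busy-period-length) sojourn, during which a Poisson stream of later arrivals appears; each later infectee overlaps the IC either for its own nested sojourn (if it departs first) or for the IC's residual sojourn (if the IC departs first), depending on its type. I would compute the expected number infected by conditioning on the arrival state $(h,\ell)$ and performing a first-step (transient) analysis on the embedded chain---tracking the IC's residual sojourn as successive arrivals and departures occur---so that the count satisfies a linear recursion whose solution is rational in $\eta$ and the loads. I expect the main obstacle to be carrying this bookkeeping through to the closed form $A$ while ensuring the recursion avoids the transcendental busy-period transform that a naive direct integration would introduce.
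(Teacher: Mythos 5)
Your derivations of Eqs.~\eqref{eqapp1}--\eqref{eqapp4} coincide with the paper's proof essentially step for step: the paper likewise obtains \eqref{eqapp1} and \eqref{eqapp3} by reducing to the autonomous high-risk M/M/1 with load $\rho_\hsf$ and taking the ``before'' half of Proposition~\ref{prop:mm1}, obtains \eqref{eqapp2} from the observation that a high-risk IC's sojourn is $\erld(h+1,\mu)$ and contains the overlap with each of the $\ell$ pre-existing low-risk customers, and obtains \eqref{eqapp4} by identifying the remaining sojourn of the $i$-th low-risk customer with the busy period $\wll$, whose tail probability is then evaluated through the busy-period Laplace transform. On these four terms you are on the paper's exact path.

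For \eqref{eqapp5}, however, your plan has a genuine gap. First, the quantity the paper actually derives here---and the one Proposition~\ref{prop:priority1} calls for, its statement listing $\rlha$ rather than $\rlla$ (the left-hand side of \eqref{eqapp5} is an internal inconsistency in the paper's notation)---is the expected number of \emph{future high-risk arrivals} infected by a low-risk IC. For those infectees your case split collapses: preemptive priority nests every later high-risk customer's sojourn strictly inside the IC's sojourn, so the overlap is always the infectee's own full sojourn, distributed $\erld(n+1,\mu)$ where $n$ is the high-risk occupancy seen at that arrival; this nesting is precisely why $A$ is rational in $\eta$, whereas an overlap given by the IC's residual sojourn would reintroduce the transcendental busy-period transform you hoped to avoid. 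Second, because of this nesting there is no need to ``track the IC's residual sojourn'' at all; a chain carrying that information is effectively two-dimensional, and it is not clear your recursion would close. The paper instead reduces the computation to a one-dimensional counting problem: it splits the IC's sojourn into $\ell+1$ sub-busy periods, each terminating at a low-risk departure, defines $V(x,y)$ as the expected number of high-risk arrivals occurring at occupancy $y$ within such a period started from state $x$, solves the resulting linear recursion to get $V(x,y)=\sum_{j=(y+1-x)^{+}}^{y}\rho_\hsf^{\,j}$, and then sums $\bigl(V(h+1,n)+\ell\,V(1,n)\bigr)\bigl(1-(1+\eta)^{-(n+1)}\bigr)$ over $n\ge1$ and averages over $\psys(h,\ell)$ to reach $A$. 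Without the nesting observation and the sub-busy-period decomposition, your sketch does not reach the closed form; and if you truly intended low-to-low after-infections, note the paper never computes $\rlla$ directly but obtains $\rnotlr$ by subtraction as $\rnotlr=\rnot-\rnothr$.
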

\begin{proof}{Proof of Proposition~\ref{prop:apppriority}.}
Eq.~(\ref{eqapp1}) and Eq.~(\ref{eqapp3}) follow from Proposition~\ref{prop:mm1} and the observation that in both cases, we only consider the SCs who are all high-risk customers, so we can treat it as an M/M/1/FCFS system which only has high-risk customers (load becomes $\rho_\hsf$). We can get Eq.~(\ref{eqapp2}) and Eq.~(\ref{eqapp4}) by directly applying Proposition~\ref{prop:main}, the sojourn time overlap distribution in the case of Eq.~(\ref{eqapp2}) follows $\erld(h+1,\mu)$ while the sojourn time overlap $\wll$ in the case of Eq.~(\ref{eqapp4}) not only depends on the present number of customers (both high- and low-risk customers) but also will be affected by the future arrival of high-risk customers (we defer the derivation of $\mathbb{P}(\wll\ge\theta)$ right after this proof). We finish this proof by finding the expression of $\rlla$, when the low-risk IC arrives.   Assuming the system state is $(h,\ell)$, there will be $\ell$ low-risk SCs and $h$ high-risk SCs in the system, all of whom will leave the system before the IC leaves.   Note that more high-risk customers may arrive before the IC leaves, and they will be served before the IC.  Therefore, we define the first busy period (with $h$ high-risk SCs) as the time until the first low-risk customer leaves the system.  Each remaining busy period will end when the next low-risk customer leaves the system. Since only future high-risk SCs will affect the process, we define $V(x,y)$ as the expected number of arrivals to an M/M/1 system with $\rho=\rho_H$ in state $y$ during current busy period before the next service completion of a low-risk customer given the initial state $x$. $V(x,y)$ can be solved by the following system:
\[\begin{cases}&V(x,y)=\dfrac{\rho_H}{1+\rho_H}V(x+1,y)+\dfrac{1}{1+\rho_H}V(x-1,y),\quad\forall~1\leq x\leq y\\&V(1,y)=\dfrac{\rho_H}{1+\rho_H}V(2,y),\\&V(y,y)=\dfrac{\rho_H}{1+\rho_H}(1+V(y,y))+\dfrac{1}{1+\rho_H}V(y-1,y)\end{cases}\] which leads to
\begin{align}
    V(x,y)=\sum_{j=(y+1-x)^{+}}^{y}{{(\rho_H)}^j}.\label{eqapp6}
\end{align}
Hence, the first busy period will contribute $\sum_{n=1}^{\infty}{V(h+1,n)(1-(1/1+\eta)^{n+1})}$ to our risk metric while the other $\ell$ busy periods will contribute $\ell\sum_{n=1}^{\infty}{V(1,n)(1-(1/1+\eta)^{n+1})}$, together with Proposition~\ref{prop:main} we get 
\begin{align}
   \rlha=\sum_{(h,\ell)\in\mathcal{S}}{\psys(h,\ell)\left(\sum_{n=1}^{\infty}{[V(h+1,n)+\ell V(1,n)] \left(1-(\dfrac{1}{1+\eta})^{n+1}\right)}\right)}.\label{eqapp7} 
\end{align}
Substituting Eq~(\ref{eqapp6}) into Eq.~(\ref{eqapp7}) and simplifying the formulas yield the claimed result in Eq.~(\ref{eqapp5}). \hfill$\square$
\end{proof}
Next we proceed to derive the expression of  $\mathbb{P}(\wll\ge\theta)$. According to the definition of $\wll$, we have \citep[See chapter 27 of][for details on the Laplace transform of the busy period]{harchol2013performance} \[\mathbb{P}(\wll\ge\theta)=1-\widetilde{W}_{\lsf\to\lsf(i)}^{(h,\ell)}(\alpha+\lhr-\lhr\widetilde{B}(\alpha))\] where \[\widetilde{W}_{\lsf\to\lsf(i)}^{(h,\ell)}(\mathbf{s})=(\dfrac{\mu}{\mu+\mathbf{s}})^{h+i},\] and  \[\widetilde{B}(\alpha)=\dfrac{1}{2\lhr}\left(\lhr+\mu+\alpha-\sqrt{{(\lhr+\mu+\alpha)}^{2}-4\lhr\mu}\right).\]
\section{Hyperexponential Transmission Thresholds}\label{hyper}

When transmission thresholds are hyperexponentially distributed, we have the following decomposition result, in terms of systems with exponentially distributed transmission thresholds.  \begin{Proposition}\label{prop:hyper}
If transmission thresholds are hyperexponentially distributed so that there exists some set of infectious-susceptible customer pair types~$\mathcal J$ such that~$\theta\sim\expd(\alpha_j)$ with probability~$q_j$ and $\sum_{j\in\mathcal J}q_j=1$, then
\[\rnot=\sum_{j\in\mathcal J}q_j\rnot[j],\]
where~$\rnot[j]$ is the~$\rnot$ when~$\theta\sim\expd(\alpha_j)$ for all infectious-susceptible customer pairs.
\end{Proposition}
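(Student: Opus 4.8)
The plan is to reduce everything to the master formula from Proposition~\ref{prop:main}, which holds for an \emph{arbitrary} threshold distribution, and then exploit the fact that a hyperexponential random variable is, by construction, a mixture of exponentials. The only probabilistic inputs I would need are that the threshold $\theta$ is independent of all the sojourn time overlap random variables $W_i^{(s)}$ (as assumed throughout the paper) and the interchangeability of nonnegative sums.

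First I would introduce an auxiliary ``type'' random variable $J$ with $\mathbb{P}(J=j)=q_j$, so that conditional on the event $\{J=j\}$ the threshold satisfies $\theta\sim\expd(\alpha_j)$; this is precisely the hierarchical description of a hyperexponential law. Because $\theta$ (and hence $J$) is independent of every $W_i^{(s)}$, the law of total probability gives, for each state $s\in\mathcal S$ and each index $i\in\{1,\ldots,n(s)\}$,
\[\mathbb{P}\!\left(W_i^{(s)}\ge\theta\right)=\sum_{j\in\mathcal J}q_j\,\mathbb{P}\!\left(W_i^{(s)}\ge\theta_j\right),\]
where $\theta_j\sim\expd(\alpha_j)$ denotes an exponential threshold of rate $\alpha_j$.

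Next I would substitute this decomposition into the expression for $\rnot$ furnished by Proposition~\ref{prop:main}, namely $\rnot=2\sum_{s\in\mathcal S}\pi(s)\sum_{i=1}^{n(s)}\mathbb{P}(W_i^{(s)}\ge\theta)$, and then interchange the mixing sum over $j\in\mathcal J$ with the sums over $s$ and $i$. Since every summand is nonnegative, Tonelli's theorem justifies this interchange even when $\mathcal S$ is countably infinite (as in the M/M/1 and M/M/$c$ cases), yielding
\[\rnot=\sum_{j\in\mathcal J}q_j\left(2\sum_{s\in\mathcal S}\pi(s)\sum_{i=1}^{n(s)}\mathbb{P}\!\left(W_i^{(s)}\ge\theta_j\right)\right).\]
Finally, I would observe that the parenthesized quantity is, by a second application of Proposition~\ref{prop:main} (this time with the exponential threshold $\theta_j$), exactly $\rnot[j]$, the value of $\rnot$ in the same queueing system when the threshold is $\expd(\alpha_j)$ for every infectious-susceptible pair. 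This gives $\rnot=\sum_{j\in\mathcal J}q_j\rnot[j]$, as claimed.

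There is no genuinely hard step here: the result is a direct consequence of the linearity already baked into Proposition~\ref{prop:main} together with the mixture representation of the hyperexponential distribution. The only points meriting a word of care are the interchange of summation when $\mathcal S$ is infinite, which is handled by nonnegativity, and the implicit but essential use of the modeling assumption that the transmission thresholds are independent of the queueing dynamics, which is precisely what lets me condition on $J$ without disturbing the distribution of the overlaps $W_i^{(s)}$.
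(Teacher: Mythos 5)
Your proposal is correct and follows essentially the same route as the paper's own proof: condition on the mixture component to write $\mathbb{P}\bigl(W_i^{(s)}\ge\theta\bigr)=\sum_{j\in\mathcal J}q_j\,\mathbb{P}\bigl(W_i^{(s)}\ge\theta_j\bigr)$, substitute into the formula of Proposition~\ref{prop:main}, interchange the sums, and recognize each inner sum as $\rnot[j]$. The only cosmetic difference is that the paper expresses the inner probabilities via Laplace transforms, $1-\widetilde W_i^{(s)}(\alpha_j)$, routing through Corollary~\ref{cor:main}, while you (carefully, and slightly more explicitly regarding Tonelli and the independence assumption) stay at the level of probabilities; the arguments are otherwise identical.
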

\begin{proof}{Proof}
First observe that in this setting we have \[\mathbb P\left(W_i^{(s)}\ge\theta\right)=\sum_{j\in\mathcal J}q_j\mathbb P\left(\left.W_i^{(s)}\ge\theta\right|\theta\sim\expd(\alpha_j)\right)=\sum_{j\in\mathcal J}q_j\left(1-\widetilde W_i^{(s)}(\alpha_j)\right),\] and so, following Eqs.~\eqref{eqn:r0} and \eqref{eqn:r0exp}, the claim follows:
\[\rnot=2\sum_{s\in\mathcal S}\pi(s)\sum_{i=1}^{n(s)}\left(\sum_{j\in\mathcal J}q_j\left(1-\widetilde W_i^{(s)}(\alpha_j)\right)\right)=\sum_{j\in\mathcal J}q_j\left(2\sum_{s\in\mathcal S}\pi(s)\sum_{i=1}^{n(s)}\left(1-\widetilde W_i^{(s)}(\alpha_j)\right)\right)=\sum_{j\in\mathcal J}q_j\rnot[j]. \]
\hfill$\square$
\end{proof}

\section{A Supplemental Discussion on the Impact of Spatial Positioning on Transmission}
Consider the setting where transmission thresholds are position-dependent as discussed in Section~\ref{sec:extension-distance}.  In this setting, the following proposition gives $\rnot$ for an~M/M/1/FCFS system:
\begin{Proposition}\label{prop:distance}
Consider an~M/M/1/FCFS system where transmission rates~$\alpha_{m,j}$ depend on positions as described in Section~\ref{sec:extension-distance}. Letting~$\eta_{m,j}\equiv\alpha_{m,j}/\mu$, we have
\[\rnot=\left(\dfrac{2\rho}{1-\rho}-(1-\rho)\sum_{s=0}^\infty\rho^s\sum_{i=1}^s\left(2-\prod_{j=0}^{i-1}\left\{\dfrac{1}{(\eta_{s+1-j,i-j})+1}\right\}-\prod_{j=0}^{i-1}\left\{\dfrac{1}{(\eta_{i-j,s+1-j})+1}\right\}\right)\right).\]
\end{Proposition}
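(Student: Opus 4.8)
The plan is to mirror the position-based phase decomposition of Appendix~\ref{app:ws} for the overlap random variables, and then to handle before- and after-arriving customers separately via the symmetry idea of Appendix~\ref{app:double}; the new feature is that position-dependent rates break the exact ``factor of two'' symmetry, since $\alpha_{m,j}$ need not equal $\alpha_{j,m}$, leaving two distinct products rather than a single doubled one.

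First I would treat the customers present at the IC's arrival. By PASTA the IC sees state $s$ with probability $\pi(s)=(1-\rho)\rho^s$ and takes queue position $s+1$, while the $s$ waiting or served customers occupy positions $1,\dots,s$ in arrival order, so customer $i$ sits at position $i$. Because FCFS preserves order, the IC stays exactly $s+1-i$ positions behind customer $i$ throughout their overlap: after the $j$-th departure ($j=0,1,\dots,i-1$) customer $i$ is at position $i-j$ and the IC at position $s+1-j$, and the overlap ends at the $i$-th departure. The $i$ inter-departure intervals are i.i.d.\ $\expd(\mu)$ (the system is nonempty throughout), and on the interval indexed by $j$ the pairwise transmission hazard is $\alpha_{s+1-j,\,i-j}$. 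Using $\mathbb E[e^{-\alpha_{m,j}T}]=\mu/(\mu+\alpha_{m,j})=1/(\eta_{m,j}+1)$ for $T\sim\expd(\mu)$, the conditional non-infection probability factorizes over the independent intervals, giving $\prod_{j=0}^{i-1}1/(\eta_{s+1-j,\,i-j}+1)$, which is exactly the first product. Hence the before-arrivals contribute $\sum_{s}\pi(s)\sum_{i=1}^{s}\bigl(1-\prod_{j=0}^{i-1}1/(\eta_{s+1-j,\,i-j}+1)\bigr)$.

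Next I would obtain the after-arrivals' contribution by the stationarity and functional-indistinguishability symmetry of Appendix~\ref{app:double}, but applied at the level of \emph{position configurations} rather than of overlap durations. In any overlapping pair the earlier arrival stays ahead (lower position) of the later one throughout, and the infectious member is equally likely to be the earlier or the later arrival, with the two cases inducing identically distributed position traversals. The after-case therefore reuses the same family of traversals indexed by $(s,i)$ but transmits in the reversed direction of each pair, so each hazard $\alpha_{s+1-j,\,i-j}$ (IC behind) is replaced by $\alpha_{i-j,\,s+1-j}$ (IC ahead). This yields the second product, and the after-arrivals contribute $\sum_{s}\pi(s)\sum_{i=1}^{s}\bigl(1-\prod_{j=0}^{i-1}1/(\eta_{i-j,\,s+1-j}+1)\bigr)$. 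Adding the two contributions gives $\rnot=\sum_{s}\pi(s)\sum_{i=1}^{s}(2-P_1-P_2)$ with $P_1,P_2$ the two products; substituting $\pi(s)=(1-\rho)\rho^s$ and $\sum_{s}\pi(s)\sum_{i=1}^{s}2=2\mathbb E[N]=2\rho/(1-\rho)$, followed by routine rearrangement, produces the stated closed form.

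The hard part will be making the after-arrival step fully rigorous once rates are asymmetric: I must verify that conditioning on the infectious member being the earlier versus the later arrival of an overlapping pair yields position-traversal sequences with identical joint laws, so that the $(s,i)$-indexing and the weights $\pi(s)=(1-\rho)\rho^s$ transfer verbatim and only the order of the two subscripts of each $\eta$ is swapped. This is precisely where reversibility of the stationary M/M/1 queue (a Burke-type argument) carries the weight; the remaining ingredients are the phase and Laplace-transform bookkeeping of the first step together with elementary algebra.
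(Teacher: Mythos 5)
Your proposal is correct and follows essentially the same route as the paper's proof: the before-arrival contribution is obtained by the identical positional-configuration argument (the IC--SC pair spends an independent $\expd(\mu)$ interval in each of the $i$ configurations, contributing non-infection factors $1/(\eta_{s+1-j,\,i-j}+1)$ that multiply), and the after-arrival contribution is obtained by the same appeal to time-reversibility of the stationary M/M/1 queue together with functional indistinguishability and FCFS, which swaps the subscripts to $\eta_{i-j,\,s+1-j}$ exactly as the paper argues. One caveat on your closing ``routine rearrangement'': writing $P_1$ and $P_2$ for the two products, your derived expression $(1-\rho)\sum_{s=0}^\infty\rho^s\sum_{i=1}^s\left(2-P_1-P_2\right)=\frac{2\rho}{1-\rho}-(1-\rho)\sum_{s=0}^\infty\rho^s\sum_{i=1}^s\left(P_1+P_2\right)$ is the right one, but it does not literally match the proposition's display, which subtracts $(1-\rho)\sum_{s}\rho^s\sum_{i}\left(2-P_1-P_2\right)$ from $\frac{2\rho}{1-\rho}$ and thereby double-subtracts the $2\rho/(1-\rho)$ term (an apparent typographical slip that also appears in the paper's own intermediate display), so your algebra reaches the intended rather than the printed formula.
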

\begin{proof}{Proof of Proposition~\ref{prop:distance}.}
This proof follows a similar argument to that presented in  Proposition~\ref{prop:mm1}.  The first crucial difference is that the probability that the IC infects the SC initially at position $i$ is not $1-(\eta+1)^{-i}$ in this setting, but rather, it is and is given by $1-\prod_{j=0}^{i-1}\left\{\frac{1}{(\eta_{s+1-j,i-j})+1}\right\}$.  This is because initially the IC is in position $s+1$ while the SC is in position $i$, then the IC is in position $s$ while the SC is position $i-1$, and so on, until the SC is in position $s+1-i$, while the IC is in position $1$.  The IC and SC (that was initially at position $i$) spend a duration of time that is distributed $\expd(\mu)$ in each of these $i$ positional configuration.  Hence, during the IC's sojourn in position $s+1-j$, given that the SC did not previously become infected, the IC \emph{fails} to infect the SC (who is concurrently in position $i-j$), with probability $\mu/((\alpha_{s+1-j,i-j})+\mu)=1/((\eta_{s+1-j,i-j})+1)$, from which the claimed infection probability follows.  Following the proof of Proposition~\ref{prop:mm1}, the expected number of customers that the IC infects among those who arrived \emph{before} the IC is given by \begin{align}\label{eqn:pos}\dfrac{\rho}{1-\rho}-(1-\rho)\sum_{s=0}^\infty\rho^s\sum_{i=1}^s\left(1-\prod_{j=0}^{i-1}\left\{\dfrac{1}{(\eta_{s+1-j,i-j})+1}\right\}\right).\end{align} Given that the M/M/1 system is time-reversible \cite[See chapters 9 and 13 of][for details]{harchol2013performance}, together with the assumption that the IC is functionally indistinguishable from SCs, and the fact that we are considering a first-come-first-serve system, the expected number of customers that the IC infects among those who arrived \emph{after} the IC is given by a modified version of the same formula given in Display~\eqref{eqn:pos}: the only modification is that $\eta_{s+1-j,i-j}$ is replaced by $\eta_{i-j,s+1-j}$ (i.e., we have reversed indices).  This modification is due to the fact that the symmetry introduced by time-reversibility does not necessarily apply to the infection rates between pairs of positions (i.e., $\alpha_{i,j}$ need not be equal to $\alpha_{j,i}$, and hence, $\eta_{i,j}$ need not be equal to $\eta_{j,i}$).  The claimed result then follows by summing these two expectations. \hfill $\square$
\end{proof}

We  proceed to  discuss  a  special  case of position-dependent  transmission  rates where rates depend on distance.  Assuming a queue proceeding in a straight line where distances between successive customers are the same, we consider a transmission model where $\alpha_{i,j}=\alpha I\{|i-j|\le d\}$, where $I\{\cdot\}$ denotes the indicator function.  That is, an IC can only infect those customers who are waiting up to $d$ positions in front of or behind them in the queue.  This model would be reasonable, if, e.g., successive customers in the queue are spaced exactly 6 feet apart and we believe that there is a non-negligible transmission risk (occurring with rate $\alpha$) when customers are spaced 6--18 feet, but the risk is assumed to be negligible when customers are spaced 24 or more feet apart; in this example, $d=3$.  In the case of an M/M/1/FCFS queue, we can compute the $\rnot$ value for this distance-based transmission model in closed form:
\begin{Proposition}\label{prop:queue_pos}
Consider the same M/M/1/FCFS system as in Proposition \ref{prop:mm1} where a susceptible customer can only be infected by an infected customer within $d$ positions in the queue from themselves.  Then we have
\small{
\[ \rnot = \frac{2\rho\left( ((1+\eta)\rho)^d(2\rho-1)+\eta^2(1+\eta)^d(\rho^2-1)+\rho^d\left( (1-\rho)^2 - (1+\eta)^d \left( \left( \frac{1}{1+\eta} \right)^d(1-\rho)^2 + 2\rho - 1 \right) \right) \right)}{\eta(1+\eta-\rho)(\rho-1)(1+\eta)^d} \]}
\end{Proposition}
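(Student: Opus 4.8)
The plan is to specialize the general position-dependent formula of Proposition~\ref{prop:distance} to the distance-based model $\alpha_{m,j}=\alpha\,I\{|m-j|\le d\}$, so that $\eta_{m,j}=\eta\,I\{|m-j|\le d\}$ and hence each factor $1/(\eta_{m,j}+1)$ equals $x:=1/(1+\eta)$ when $|m-j|\le d$ and equals $1$ otherwise. The key structural observation is that, for an IC arriving to find $s$ customers (so the IC takes queue position $s+1$) and a susceptible customer initially in position $i\le s$, the two advance through the queue in lockstep under FCFS: after $j$ departures the IC sits in position $s+1-j$ and the susceptible customer in position $i-j$, so the gap between them stays constant and equal to $s+1-i$ throughout the entire overlap. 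Consequently every factor in the product $\prod_{j=0}^{i-1}1/(\eta_{s+1-j,\,i-j}+1)$ is identical, and the product equals $x^i$ when $s+1-i\le d$ and equals $1$ when $s+1-i>d$. Because the indicator $I\{|m-j|\le d\}$ is symmetric in its arguments, the reverse-index product governing customers who arrive \emph{after} the IC takes exactly the same value, so both products in the Proposition~\ref{prop:distance} formula coincide.

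First I would substitute these evaluations. The summand $2-\prod_1-\prod_2$ collapses to $2(1-x^i)$ precisely when $s+1-i\le d$ and to $0$ otherwise, yielding
\[\rnot=\frac{2\rho}{1-\rho}-2(1-\rho)\sum_{s=1}^{\infty}\rho^s\sum_{i=\max(1,\,s-d+1)}^{s}\bigl(1-x^i\bigr).\]
Next I would split the outer sum at $s=d$: for $1\le s\le d$ the inner index runs over the full range $i=1,\dots,s$, while for $s\ge d+1$ it runs over a window $i=s-d+1,\dots,s$ of exactly $d$ consecutive terms. In each regime the inner sum of $1-x^i$ separates into a count (contributing $s$ or $d$) and a geometric sum $\sum x^i$ evaluated in closed form; summing these against $\rho^s$ reduces everything to a handful of standard sums: the finite arithmetic--geometric sum $\sum_{s=1}^{d}s\rho^s$, the finite geometric sums $\sum_{s=1}^{d}(\rho x)^s$ and $\sum_{s=1}^{d}\rho^s$, and the convergent tails $\sum_{s>d}\rho^s$ and $\sum_{s>d}(\rho x)^s$ (all convergent since $\rho<1$, $x<1$, and hence $\rho x<1$).

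Finally I would substitute the closed forms of these sums, using $1-x=\eta/(1+\eta)$, $\rho x=\rho/(1+\eta)$, and $1-\rho x=(1+\eta-\rho)/(1+\eta)$ to clear the $x$'s in favor of $\eta$ and $\rho$, and combine over the common denominator $\eta(1+\eta-\rho)(\rho-1)(1+\eta)^d$. The main obstacle is this last consolidation: the several geometric and arithmetic--geometric contributions must be collected so that the powers $(1+\eta)^d$, $\rho^d$, and $((1+\eta)\rho)^d$ group exactly as in the stated numerator. This step is purely mechanical but bookkeeping-heavy, and I would guard against sign and index-shift errors by checking the limit $d\to\infty$, in which the $\rho^d$ and $(\rho x)^d$ terms vanish and the expression must collapse to the plain M/M/1 value $2\bigl(\rho/(1-\rho)\bigr)\bigl(\eta/(\eta+1-\rho)\bigr)$ of Proposition~\ref{prop:mm1}, and by verifying a small case such as $d=1$ directly against the truncated double sum.
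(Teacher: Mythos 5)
Your overall route---specialize Proposition~\ref{prop:distance} via the lockstep observation that the IC--SC gap under FCFS is frozen at $s+1-i$, collapse each product to $x^i$ (with $x=1/(1+\eta)$) or to $1$ according to whether $s+1-i\le d$, split the outer sum at $s=d$, and evaluate geometric and arithmetic--geometric sums---is substantively the same computation as the paper's proof, which starts directly from the symmetry argument of Proposition~\ref{prop:mm1} and writes $\rnot=2\sum_{s}\pi(s)\sum_{k=1}^{\min\{s,d\}}\bigl(1-\widetilde{W}_{s-k+1}^{(s)}(\alpha)\bigr)$ with $\widetilde{W}_{s-k+1}^{(s)}(\alpha)=(1/(1+\eta))^{s-k+1}$. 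Your remark that the symmetry of the indicator makes the forward-indexed and reverse-indexed products coincide is correct and necessary here, since Proposition~\ref{prop:distance} permits asymmetric rates.

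However, your first substitution contains a genuine error. Your displayed identity
\[\rnot=\frac{2\rho}{1-\rho}-2(1-\rho)\sum_{s=1}^{\infty}\rho^s\sum_{i=\max(1,\,s-d+1)}^{s}\bigl(1-x^i\bigr)\]
is wrong; the correct reduction, and the quantity the paper's proof actually evaluates, is
\[\rnot=2(1-\rho)\sum_{s=1}^{\infty}\rho^s\sum_{i=\max(1,\,s-d+1)}^{s}\bigl(1-x^i\bigr),\]
with no leading $2\rho/(1-\rho)$ and the opposite sign on the double sum---your expression equals $2\rho/(1-\rho)$ minus the right answer. You inherited this from the display of Proposition~\ref{prop:distance} as printed, which double counts: once the term $2\,\mathbb{E}[N]=2\rho/(1-\rho)$ is pulled out, the summand must be the \emph{sum of the two products}, not $2$ minus that sum (equivalently, keep the summand $2$ minus the products but drop the leading term); its own proof makes clear that the intended object is the sum over $i$ of the infection probabilities $1-\prod(\cdot)$. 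Your advertised $d\to\infty$ check would have exposed the slip: your expression tends to $2\rho/(1+\eta-\rho)$ rather than Proposition~\ref{prop:mm1}'s $2\rho\eta/\bigl((1-\rho)(1+\eta-\rho)\bigr)$, so as written the proof fails at this step and, carried mechanically forward, would not produce the stated formula. One caution for your planned $d=1$ verification: the closed form printed in the statement appears to carry its own transcription slip---since $(1+\eta)^d\bigl(1/(1+\eta)\bigr)^d=1$, the first and third numerator terms cancel exactly, leaving the $d$-independent expression $2\rho\eta(1+\rho)/(1+\eta-\rho)$---so you should validate your consolidated algebra against the windowed double sum above (which is what the paper's proof derives) rather than against the printed display.
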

\begin{proof}{Proof of Proposition~\ref{prop:queue_pos}.}
Noting that a tagged IC overlaps with, at most, $d$ customers at the back of the queue upon their arrival, we can again follow the same symmetry argument as in Proposition \ref{prop:mm1} and condition on whether or not the length of the queue on arrival exceeds the threshold amount $d$ to find
\small{
\begin{align*}
    \rnot &= 2\sum_{s=0}^\infty \pi(s) \sum_{k=1}^{\min\{s,d\}} \left( 1 - \widetilde{W}_{s-k+1}^{(s)}(\alpha) \right) \\
    &= 2\left( \sum_{s=0}^d \pi(s)\sum_{k=1}^s \left( 1 - \widetilde{W}_{s-k+1}^{(s)}(\alpha) \right) + \sum_{s=d+1}^\infty \pi(s) \sum_{k=1}^d\left( 1 - \widetilde{W}_{s-k+1}^{(s)}(\alpha)\right) \right)\\
    &= 2\left( \sum_{s=0}^d (1-\rho)\rho^s \sum_{k=1}^s \left( 1 - \left(\frac{1}{1+\eta}\right)^{s-k+1}\right) + \sum_{s=d+1}^\infty (1-\rho)\rho^s  \sum_{k=1}^d\left( 1 - \left(\frac{1}{1+\eta}\right)^{s-k+1}\right) \right)\\
    &= \frac{2\rho\left( ((1+\eta)\rho)^d(2\rho-1)+\eta^2(1+\eta)^d(\rho^2-1)+\rho^d\left( (1-\rho)^2 - (1+\eta)^d \left( \left( \frac{1}{1+\eta} \right)^d(1-\rho)^2 + 2\rho - 1 \right) \right) \right)}{\eta(1+\eta-\rho)(\rho-1)(1+\eta)^d}.
\end{align*}}
\hfill$\square$
\end{proof} \label{app:distance}

\end{document}